\newtheorem{thm}{Theorem}
\numberwithin{thm}{section}
\newtheorem{lem}[thm]{Lemma}
\newtheorem{cor}[thm]{Corollary}
\newtheorem{prop}[thm]{Proposition}
\newtheorem*{acknowledgements}{Acknowledgements}
\newtheorem{defi}[thm]{Definition}
\newtheorem{rem}[thm]{Remark}
\newcommand{\R}{\mathbb{R}}
\newcommand{\Rbar}{\overline{\mathbb{R}}}
\newcommand{\X}{\mathcal{X}}
\newcommand{\norm}[1]{\left\|#1\right\|}
\newcommand{\paren}[1]{\left(#1\right)}
\newcommand{\abs}[1]{\left|#1\right|}
\newcommand{\lsp}{\left\langle}
\newcommand{\rsp}{\right\rangle}
\newcommand{\f}{f} 
\newcommand{\p}{\xi}	
\newcommand{\taubar}{\overline{\tau}}
\newcommand{\F}{\mathcal{F}}
\newcommand{\conv}{r} 
\newcommand{\smoo}{s} 
\newcommand{\breg}[2]{\Delta_{#1}^{#2}}
\newcommand{\symbreg}[1]{\Delta_{#1}^{\rm{sym}}}
\newcommand{\keywords}[1]{\ \\ {\bf Keywords:} #1 \\ {\bf MSC:} 46N10, 47N10 }
\title{Upper and lower bounds for the Bregman divergence}
\author{Benjamin Sprung\thanks{Lotzestr. 16-18, 37083 G\"ottingen, Germany, \href{mailto:b.sprung@math.uni-goettingen.de}{b.sprung@math.uni-goettingen.de}}        
}
\begin{document}

\maketitle

\begin{abstract}
In this paper we study upper and lower bounds on the Bregman divergence $\breg{\F}{\p}(y,x):=\F(y)-\F(x)-\lsp \p, y-x\rsp $ for some convex functional $\F$ on a normed space $\X$, with subgradient $\p\in\partial\F(x)$. We give a considerably simpler new proof of the inequalities by  
Xu and Roach for the special case $\F(x)=\norm{x}^p, p>1$. The results can be transfered to more general functions as well.

\keywords{Bregman divergence, Bregman distance, uniform smoothness, uniform convexity, total convexity}

\end{abstract}

\section{Introduction}

In recent times the Bregman divergence (or Bregman distance) $\breg{\F}{x^*}(y,x)$, introduced by Bregman in \cite{Bregman1967}, has been used as a generalized distance measure in various branches of applied mathematics, for example optimization, inverse problems, statistics and computational mathematics, especially machine learning. To get an overview over the Bregman divergence and its possible applications in optimization and inverse problems we refer to \cite{Censor97} respectively \cite{Burger16}. In particular the Bregman divergence has been used for various algorithms in numerical analysis and also for convergence analysis of numerical methods and algorithms. 

Especially when doing convergence analysis it is often crucial to have lower and upper bounds on the Bregman divergence in terms of norms. 
In \cite{Xu1991} the authors prove 
upper and lower bounds for expressions 
\begin{align}\label{eq:lin_err}
	\norm{x+y}^p-\norm{x}^p-p\lsp j_p (x),y\rsp=:\breg{\F}{j_p(x)}(x+y,x),
\end{align}
where $j_p:\X\to\X^*$ is a duality mapping, under certain assumptions on the Banach space $\X$. As it turns out that \eqref{eq:lin_err} is the Bregman divergence corresponding to the functional $\F=\norm{\cdot}^p$ these results have been used since then in many papers working with the Bregman divergence. However from the proofs of \cite{Xu1991} it seems difficult to transfer the results to other functions $\F$. Thus we develop in this work a simple framework to find such bounds and in fact can apply it to give a short new  proof of the results from \cite{Xu1991} for $\F(x)=\norm{x}^p, p>1$ . 

Our approach is as follows: Proving upper bounds is rather simple if one sufficiently understands the smoothness of $\F$ as the Bregman divergence is  basically a linearization error and 
linearization errors are related to differentiability by definition. In particular we will show that one can obtain upper bounds for the Bregman divergence corresponding to $\F=\phi(\norm{\cdot})$, if $\phi:\R\to\R$ is convex and sufficiently smooth.

Regarding lower bounds we will make use of $\F^*$, the convex conjugate of $\F$. Actually it can be shown that lower bounds for $\breg{\F}{x^*}(y,x)$ correspond to upper bounds for 
$\breg{\F^*}{x}(y^*,x^*)$. Note that this idea is not at all new. Already in \cite{Zalinescu83} this kind of connection between $\F$ and $\F^*$ was discussed in depth. So again one can just make use of the smoothness of 
$\F^*$ to conclude lower bounds for $\breg{\F}{x^*}(y,x)$. One might argue that convex conjugates can be rather complicated functions and expecting differentiability is too optimistic.  This is true to some extent, but actually reasonable lower bounds on $\breg{\F}{x^*}(y,x)$ already imply differentiability of $\F^*$ at $x^*$ 
(see \cite[Theorem 2.1]{Zalinescu83}). So if one has any hope on finding lower bounds then one might as well work with the convex conjugate. 

One reason why our proof is simpler than the proof from \cite{Xu1991} is that they did it the other way round. They firstly proved lower bounds with quite some effort and then used the convex conjugate to show upper bounds.

We will focus mainly on asymptotic bounds for $\breg{\F}{x^*}(y,x)$ as $\norm{x-y}\to 0$.  It is the more interesting case for applications as for example in convergence analysis one
will be interested in the Bregman divergence of $x_n$ and $x$, where $x_n\to x$. Also theoretical it is the more challenging case, since for $\norm{x-y}\to\infty$ the Bregman divergence $\breg{\F}{x^*}(y,x)$ will mostly depend on the behavior of $\F(y)$ as $y$ tends to infinity and it should be easy to find lower and upper bounds. In particular we will show at the end of the paper, how one can deduce uniform bounds for all $x,y\in\X$ from the asymptotic bounds for the case $\F=\norm{\cdot}^p$.

The paper consists of 4 sections. In Section \ref{chap:conv_anal} we recall some basis notions of convex analysis. In Section \ref{chap:moduli} we define moduli of smoothness and convexity corresponding to a general functional $\F$ and develop some properties of them. Finally in Section \ref{chap:norm_powers} we then use the theory from Section \ref{chap:moduli} on the functional $\F=\tfrac{1}{p}\norm{\cdot}^p$ for $p>1$ and find lower and upper bounds for the corresponding Bregman divergence given by the smoothness respectively the convexity of the space $\X$ as shown in \cite{Xu1991}.

\section{Tools from convex analysis} \label{chap:conv_anal}

In this work $\X$ will always be a real Banach space, with $\dim \X\ge 2$, $\X^*$ denotes its dual space, $S_\X=\{x\in\X:\norm{x}=1\}$ the unit sphere and $\F: \X\to \Rbar:=\R\cup \{\infty\}$ some function.   We will need some basic concepts from convex analysis, so we shortly recall them in this chapter.

	$x^*\in\X^*$ is called a \emph{subgradient} of a convex function $\F: \X\to \Rbar$ at $x\in\X$ if $\F(x)$ is finite and
	\begin{align} \label{eq:subgradient}
		\F(y)\ge \F(x) +\lsp x^*, y-x \rsp,
	\end{align}
	for all $y\in \X$. The set of all subgradients of $\F$ at $x$ is called the \emph{subdifferential} of $\F$ at $x$ and denoted by $\partial\F(x)$.
	The \emph{convex conjugate} $\F^*:\X^* \to \Rbar$ of $\F$ is defined by
	\begin{align*}
		\F^*(x^*)=\sup_{x\in\X}\left[ \lsp x^*,x\rsp-\F(x) \right].
	\end{align*}
From this two definitions one can directly conclude the following \emph{generalized Young (in)equality}.
	For all $x\in\X, x^*\in\X^*$ we have
	\begin{align} \label{eq:young}
		\F(x)+\F^*(x^*)\ge \lsp x^*,x\rsp.
	\end{align}
	Equality holds true if and only if $x^*\in\partial\F(x)$. Further we have
\begin{align} \label{eq:doublestar}
	\F\ge \F^{**}:=\paren{\F^*}^*,
\end{align}
where equality holds if and only if $\F$ is convex and lower-semicontinuous.
Finally we define the object of interest of this work.
	For $\F(x)<\infty$ and $x^*\in\partial\F(x)$ the \emph{Bregman divergence} $\breg{\F}{x^*}(y,x)$ is given by
	\begin{align*} 
		\breg{\F}{x^*}(y,x)=\F(y)-\F(x)-\lsp x^*, y-x\rsp\ge 0,
	\end{align*}
	for all $y\in\X$.
We will be especially interested in functionals $\F(x)=\tfrac{1}{p}\norm{x}^p$ for some $p\ge 1$ and need to understand their subdifferentials, so finally we have the following.
	 For some $p\ge 1$ the set-valued mapping $J_p:\X\to 2^{X^*}$ given by
	 \begin{align*}
	 	J_p(x)=\left\{ x^*\in\X^*: \lsp x^*,x\rsp =\norm{x^*}\norm{x}, \norm{x^*}=\norm{x}^{p-1} \right\}
	 \end{align*}
	 is called the \emph{duality mapping} with respect to $p$ of $\X$. The sets $J_p(x)$ are always non-empty. A mapping $j_p:\X\to\X^*$ is called \emph{selection} of $J_p$ if 
	 $j_p(x)\in J_p(x)$ for all $x\in\X$.
	 If $\F(x)=\tfrac{1}{p}\norm{x}^p$, then we have \cite[Chap.1, Theorem 4.4]{Cioranescu1990}
	 \begin{align*}
	 	\partial\F(x)=J_p(x).
	 \end{align*}

\section{Moduli of smoothness and convexity} \label{chap:moduli}

Finding upper bounds for \eqref{eq:lin_err} is related to the smoothness of the norm of $\X$ whereas lower bounds are related to convexity. Thus it is necessary to understand the moduli of smoothness and convexity of the space $\X$ and we shortly
 recall their definitions (see e.g.~\cite{Lindenstrauss1979}): 
\begin{defi}\label{defi:smooth_convex}
	The modulus of convexity $\delta_\X\colon [0,2]\to [0,1]$ of the space $\X$ is defined by
	\begin{equation*}
		\delta_\X(\varepsilon):=\inf \{1-\norm{y+\tilde{y}}/2:y,\tilde{y}\in S_\X, \norm{y-\tilde{y}}=\varepsilon\}.
	\end{equation*}
	The modulus of smoothness $\rho_\X\colon [0,\infty)\to [0,\infty)$ of $\X$ is defined by
	\begin{equation*}
		\rho_\X(\tau):= \sup\{(\norm{x+\tau y}+\norm{x-\tau y})/2-1:x,y\in S_\X\}.
	\end{equation*}
	The space $\X$ is called \textit{uniformly convex} if $\delta_\X(\varepsilon)>0$ for every $\varepsilon>0$. It is called \textit{uniformly smooth} 
	if $\lim_{\tau\to 0} \rho_\X(\tau)/\tau=0.$
	The space $\X$ is called \textit{$\conv$-convex} (or convex of power type $\conv$) if there exists a constant $K>0$ such that $\delta_\X(\varepsilon)\ge K\varepsilon^\conv$ for all $\varepsilon\in [0,2]$. Similarly, it is called $\smoo$-smooth (or smooth of power type $\smoo$) if $\rho_\X(\tau)\le K\tau^\smoo$ for all $\tau>0$.
\end{defi}
These two moduli have a well-developed theory, which is known in the literature for a long time and we will not discuss all their properties. However for our proofs we will need some specific properties stated in the following.
\begin{lem} \label{lem:mods_properties}
	\begin{enumerate}
		\item We have for $\tau_1\le\tau_2 $ that $\rho_\X(\tau_1) /\tau_1\le \rho_\X(\tau_2)/\tau_2$. \label{lem:mods_properties1}
		\item We have for all $\taubar>0$ that there exists a constant $C_{\taubar}$ such that for all Banach spaces $\X$ we have \label{lem:mods_properties4}
		\begin{align*}
			\rho_\X(\tau)\ge (1+\tau)^\frac{1}{2}-1\ge C_{\taubar}\tau^2, \qquad \tau\le\taubar.
		\end{align*}
		\item If $\delta_\X$ is extended by $\infty$ on $\R\setminus [0,2]$ then $(2\delta_\X)^*=2\rho_{\X^*}$. \label{lem:mods_properties2}
		\item There exists a convex function $f$ such that $\delta_\X(\tau/2)\le f(\tau)\le \delta_\X(\tau)$. In particular we have \label{lem:mods_properties3}
		$ \delta_\X^{**}(\tau)\ge  \delta_\X(\tau/2)$.
	\end{enumerate}
\end{lem}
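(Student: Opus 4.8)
The four parts are essentially independent, so I would prove them one at a time, leaving the one assertion that needs genuine outside input for last. For part (1) I would observe that for each fixed $x,y\in S_\X$ the function $\tau\mapsto\tfrac12(\norm{x+\tau y}+\norm{x-\tau y})-1$ is convex in $\tau$ (a sum of norms precomposed with affine maps) and vanishes at $\tau=0$; hence $\rho_\X$, being a pointwise supremum of such functions, is convex with $\rho_\X(0)=0$, and therefore $\tau\mapsto\rho_\X(\tau)/\tau$ is non-decreasing by the usual monotonicity of the slopes of a convex function through the origin. For the right-hand inequality in part (2): since $s\mapsto(1+s)^{1/2}-1$ is concave and vanishes at $0$, on $[0,\taubar^{2}]$ it lies above its chord, which with $s=\tau^{2}$ gives $(1+\tau^{2})^{1/2}-1\ge C_{\taubar}\tau^{2}$ for $\tau\le\taubar$, with $C_{\taubar}=((1+\taubar^{2})^{1/2}-1)/\taubar^{2}$.

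For part (3) I would carry out a direct conjugate computation. Unfolding the definition gives $\delta_\X(\varepsilon)=1-\tfrac12\sup\{\norm{y+\tilde y}:y,\tilde y\in S_\X,\ \norm{y-\tilde y}=\varepsilon\}$, and for every $\varepsilon\in[0,2]$ such a pair exists since the unit sphere is connected ($\dim\X\ge2$). Hence, for $\tau\ge0$,
\begin{equation*}
	(2\delta_\X)^*(\tau)=\sup_{\varepsilon\in[0,2]}\bigl[\tau\varepsilon-2\delta_\X(\varepsilon)\bigr]
	=\sup_{y,\tilde y\in S_\X}\bigl[\tau\norm{y-\tilde y}+\norm{y+\tilde y}\bigr]-2 .
\end{equation*}
Writing $\norm{y+\tilde y}=\sup_{a^*\in S_{\X^*}}\lsp a^*,y+\tilde y\rsp$ and $\norm{y-\tilde y}=\sup_{b^*\in S_{\X^*}}\lsp b^*,y-\tilde y\rsp$, interchanging the (commuting) suprema, and using $\sup_{y\in S_\X}\lsp a^*+\tau b^*,y\rsp=\norm{a^*+\tau b^*}$ together with the companion identity for $\tilde y$, the right-hand side turns into $\sup_{a^*,b^*\in S_{\X^*}}[\norm{a^*+\tau b^*}+\norm{a^*-\tau b^*}]-2=2\rho_{\X^*}(\tau)$; both sides vanish for $\tau\le0$ under the obvious convention.

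For part (4) I would invoke the classical fact that $\varepsilon\mapsto\delta_\X(\varepsilon)/\varepsilon$ is non-decreasing on $(0,2]$ and then integrate: put $f(\tau):=\int_0^\tau\delta_\X(t)/t\,\diff t$ for $\tau\in[0,2]$ and $f\equiv+\infty$ elsewhere. Then $f$ is convex with $f(0)=0$, being the integral of a non-decreasing function; estimating the integrand above by $\delta_\X(\tau)/\tau$ on $(0,\tau)$ gives $f(\tau)\le\delta_\X(\tau)$, and below by $\delta_\X(\tau/2)/(\tau/2)$ on $(\tau/2,\tau)$ gives $f(\tau)\ge\delta_\X(\tau/2)$. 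The ``in particular'' then follows because $\delta_\X^{**}$ is the largest convex lower-semicontinuous minorant of the $+\infty$-extension of $\delta_\X$: after replacing $f$ by its lower-semicontinuous hull (which alters it only at the endpoints $0,2$ and, by continuity of $\delta_\X$ at interior points, still obeys the same two-sided estimate) we get $\delta_\X(\tau/2)\le f(\tau)\le\delta_\X^{**}(\tau)$.

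This leaves the left-hand inequality in part (2), $\rho_\X(\tau)\ge(1+\tau^{2})^{1/2}-1$ (the middle expression is the exact modulus of smoothness of a Hilbert space, so the printed exponent should read $\tau^{2}$), which is the classical statement that no Banach space of dimension $\ge2$ is smoother than a Hilbert space. I would deduce it from part (3) together with Nordlander's inequality $\delta_\X\le\delta_{\ell^{2}}$ for the modulus of convexity — taking conjugates reverses the inequality, so $2\rho_{\X^*}=(2\delta_\X)^*\ge(2\delta_{\ell^{2}})^*=2\rho_{\ell^{2}}$, and the same then holds for $\X$ itself since the modulus of smoothness is determined by two-dimensional sections — or else simply cite it from the standard references. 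The genuinely non-routine ingredients are exactly this inequality (equivalently Nordlander's two-dimensional geometric estimate) and the monotonicity of $\delta_\X(\varepsilon)/\varepsilon$ used in part (4); everything else is bookkeeping, and I would expect the write-up to cite those two facts or to concentrate its effort there.
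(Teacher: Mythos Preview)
Your proposal is correct and considerably more detailed than the paper's own proof, which consists of the single sentence ``All statements follow easily from \cite[Ch.~1.e]{Lindenstrauss1979}.'' Your arguments for parts (1), (3), and (4) are standard and sound: the convexity-of-a-supremum argument in (1), the Lindenstrauss duality computation in (3), and the integral trick $f(\tau)=\int_0^\tau \delta_\X(t)/t\,\diff t$ in (4) are precisely the content one finds behind that citation. You also correctly identify the typo in part (2) --- the middle term should be $(1+\tau^{2})^{1/2}-1$, the Hilbert-space modulus, not $(1+\tau)^{1/2}-1$, since otherwise the inequality $\rho_\X(\tau)\ge(1+\tau)^{1/2}-1\approx\tau/2$ would fail for every uniformly smooth space --- and your plan to obtain the left inequality from Nordlander's inequality via the duality of part (3) is the usual route. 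The two ingredients you single out as non-routine (Nordlander's inequality and the monotonicity of $\delta_\X(\varepsilon)/\varepsilon$) are exactly the substantive facts from \cite{Lindenstrauss1979} that the paper is implicitly invoking; everything else, as you say, is bookkeeping. In short, you have unpacked the reference the paper merely points to.
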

\begin{proof}
All statements follow easily from \cite[Ch. 1.e]{Lindenstrauss1979}.
\end{proof}
For our purposes it will be more natural to introduce new definitions of the moduli of smoothness and convexity related to functionals instead of spaces. 
\begin{defi}\label{defi:Fsmooth_Fconvex}
	Let $\F\colon \X\to\Rbar$ be some arbitrary function, $x\in \X$, $\F(x)<\infty$ and $\p\in\X^*$. 
	Define the \emph{linearization error functional}  $\breg{\F}{\p}(y,x)$ by
	\begin{align*} 
		\breg{\F}{\p}(y,x)=\F(y)-\F(x)-\lsp \p, y-x\rsp.
	\end{align*}
	The modulus of smoothness $\rho_{\F,x}^\p\colon [0,\infty)\to [0,\infty]$ of $\F$ in $x$ with respect to $\p$ is defined by
	\begin{align*}
		\rho_{\F,x}^\p(\tau):=\sup_{y\in S_\X}\abs{\F(x+\tau y)-\F(x)-\lsp \p,\tau y\rsp}=\sup_{\norm{x-y}=\tau}\abs{\breg{\F}{\p}(y,x)}.
	\end{align*}
	The modulus of convexity $\delta_{\F,x}^\p\colon [0,\infty)\to [0,\infty]$ of $\F$ in $x$ with respect to $\p$ is defined by
	\begin{align*}
		\delta_{\F,x}^\p(\tau):=\inf_{\norm{x-y}=\tau}\abs{\breg{\F}{\p}(y,x)}.
	\end{align*}
	$\F$ is called \textit{$\conv$-convex} (or convex of power type $\conv$) in $x$ (w.r.t. $\p$) if there exists $K,\taubar>0$ such that $\delta_{\F,x}^\p(\tau)\ge K\tau^\conv$ for all $0<\tau\le \taubar$. Similarly, it is called $\smoo$-smooth (or smooth of power type $\smoo$) in $x$ (w.r.t. $\p$) if $\rho_{\F,x}^\p(\tau)\le K\tau^\smoo$ for all $0<\tau\le \taubar$.
\end{defi}

The quantities $\rho_{\F,x}^\p$, $\delta_{\F,x}^\p$  give us a reformulation of our basic problem: We want to find upper bounds for $\rho_{\F,x}^\p(\tau)$ and lower bounds for 
$\delta_{\F,x}^\p(\tau)$. Before we show some properties of these functions we should state some simple facts for their interpretation.
\begin{rem}
	We will mostly consider convex functions $\F$ with $\p\in\partial\F(x)$ so that the linearization error functional is a Bregman divergence and one can neglect the absolute value. 
	
	$\F$ is Fr\'{e}chet-differentiable in $x$ if and only if there exists  $\p\in\X^*$, such that $\rho_{\F,x}^\p(\tau)/\tau\to 0$ as $\tau\to 0$. $\F$ being $\smoo$-smooth in $x$, with $\smoo\in (1,2]$ then can be seen as a stronger form of differentiability, comparable to fractional derivatives, however $\F$ being $2$-smooth is not equivalent to twice differentiability but rather to the notion of strong smoothness.
	
	If there exists a selection $j:\X\to \X^*$ of the subdifferential of $\F$, i.e. for every $x$ exists $j(x)\in\partial\F(x)$, then this implies already that $\F$ is convex. 
	$\delta_{\F,x}^{j(x)}(\tau)>0$ for all $x,\tau$ implies strict convexity and as before $\conv$-convexity is an even stronger notion of convexity and $2$-convexity is connected to strong convexity. In \cite{Butnariu97} the \emph{modulus of local (or total) convexity of $\F$}, $\nu_\F(x,\tau)$, was introduced and is basically given by $\delta_{\F,x}^\p(\tau)$ just that 	$\lsp \p,y-x\rsp$ is replaced by the right hand side derivative of $\F$ at $x$ in direction $y-x$. If $\F(x)$ is convex and G\^{a}teaux-differentiable then $\nu_\F(x,\tau)$ coincides with $\delta_{\F,x}^{\p}(\tau)$, where $\p=\F'(x)$. The modulus of total convexity has been studied in several papers.
\end{rem}

It turns out that for functionals $\F$ that originate from the norm of $\X$ the moduli of the space and of the functions are closely related.

\begin{prop} \label{prop:smoo_mods_equiv}
	Let $\F=\norm{\cdot}_\X$ and  for all $x\in\X$ let $\p_x\in \partial\F(x)$ be arbitrary.
	We have
		\begin{align}
			 \rho\le  \sup_{x\in S_\X}\rho_{\F,x}^{\p_x}\le 2\rho.
		\end{align}	
\end{prop}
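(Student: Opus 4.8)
The plan is to trade the two-sided difference that defines the modulus of smoothness $\rho_\X$ of the \emph{space} for the one-sided linearization error that defines $\rho_{\F,x}^{\p_x}$, by the elementary device of averaging a direction $y\in S_\X$ with its opposite $-y$. The only structural input is that $\p_x\in\partial\F(x)$ for $\F=\norm{\cdot}$, which gives two facts. First, since $\p_x$ is a subgradient, the Bregman divergence $\breg{\F}{\p_x}(y,x)=\norm{y}-\norm{x}-\lsp\p_x,y-x\rsp$ is nonnegative for every $y$, so the absolute value in Definition~\ref{defi:Fsmooth_Fconvex} is redundant and $\rho_{\F,x}^{\p_x}(\tau)=\sup_{y\in S_\X}\bigl(\norm{x+\tau y}-1-\tau\lsp\p_x,y\rsp\bigr)$ for $x\in S_\X$. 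Second, for $x\in S_\X$ the contributions $\mp\tau\lsp\p_x,y\rsp$ cancel on adding the two one-sided errors, so
\begin{align*}
\breg{\F}{\p_x}(x+\tau y,x)+\breg{\F}{\p_x}(x-\tau y,x)=\norm{x+\tau y}+\norm{x-\tau y}-2 .
\end{align*}

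For the upper bound, fix $x\in S_\X$ and any $y\in S_\X$. By the definition of $\rho_\X$ (applied with this $x$ and this $y$, both unit vectors), the right-hand side of the displayed identity is at most $2\rho_\X(\tau)$. Since both summands on the left are nonnegative, each of them is at most $2\rho_\X(\tau)$; in particular $\norm{x+\tau y}-1-\tau\lsp\p_x,y\rsp\le 2\rho_\X(\tau)$. Taking the supremum over $y\in S_\X$ yields $\rho_{\F,x}^{\p_x}(\tau)\le 2\rho_\X(\tau)$, and then the supremum over $x\in S_\X$ gives the upper inequality.

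For the lower bound, fix arbitrary $x,y\in S_\X$. From the definition of $\rho_{\F,x}^{\p_x}$ both $\norm{x+\tau y}-1-\tau\lsp\p_x,y\rsp$ and $\norm{x-\tau y}-1+\tau\lsp\p_x,y\rsp$ are at most $\rho_{\F,x}^{\p_x}(\tau)$; averaging these two inequalities cancels the linear term and gives
\begin{align*}
\tfrac{1}{2}\bigl(\norm{x+\tau y}+\norm{x-\tau y}\bigr)-1\le \rho_{\F,x}^{\p_x}(\tau)\le \sup_{x\in S_\X}\rho_{\F,x}^{\p_x}(\tau).
\end{align*}
As $x,y\in S_\X$ were arbitrary, taking the supremum of the left-hand side over all such $x,y$ yields $\rho_\X(\tau)\le\sup_{x\in S_\X}\rho_{\F,x}^{\p_x}(\tau)$.

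I do not expect a real obstacle: the argument is essentially just unwinding the two definitions against each other. The two small points that must be handled carefully are that nonnegativity of the Bregman divergence is precisely what lets one pass, in the upper bound, from a bound on the \emph{sum} of the two one-sided errors to a bound on \emph{each} of them, and that the entire computation never uses any particular property of the selection beyond $\p_x\in\partial\F(x)$, so the inequalities hold for every selection $x\mapsto\p_x$, as claimed.
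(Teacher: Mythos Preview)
Your proof is correct and follows essentially the same approach as the paper: both arguments rest on the identity $\breg{\F}{\p_x}(x+\tau y,x)+\breg{\F}{\p_x}(x-\tau y,x)=\norm{x+\tau y}+\norm{x-\tau y}-2$, using nonnegativity of each summand (i.e.\ the subgradient inequality) for the upper bound and bounding each summand by $\rho_{\F,x}^{\p_x}(\tau)$ for the lower bound. Your write-up is somewhat more explicit about why the absolute value can be dropped and why the sum-to-each step works, but the underlying argument is identical.
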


\begin{proof}
	We have
	\begin{align*}
		2\sup_{x\in S_\X}\rho_{\F,x}^{\p_x}(\tau)\ge \sup \left\{ \F(x+\tau y)+\F(x-\tau y)-2 : x,y\in S_\X \right\}=2 \rho(\tau).
	\end{align*}
	and for all $x,y\in S_\X$ we have by the definition of the subdifferential that
	\begin{align*}
		\F(x+\tau y)-\F(x)-\lsp\p_x,\tau y\rsp\le \F(x+\tau y)+\F(x-\tau y)-2\le 2 \rho(\tau).
	\end{align*}
\end{proof}

So this already gives us an upper bound for $\rho_{\norm{\cdot}_\X,x}^\p(\tau)$ if $x\in S_\X, \p\in\partial\F(x)$. For generalizing this to all $x\in\X$ we use the following.

\begin{prop}\label{prop:homo}
	If the functional $\F$ is positively $q$-homogeneous then we have for all $x\in\X,\p\in\X^*$ that
	\begin{align*}
		\norm{x}^q\delta_{\F,x/\norm{x}}^{\p/\norm{x}^{q-1}}\paren{\frac{\norm{x-y}}{\norm{x}}}\le \abs{\breg{\F}{\p}(y,x)}\le \norm{x}^q\rho_{\F,x/\norm{x}}^{\p/\norm{x}^{q-1}} \paren{\frac{\norm{x-y}}{\norm{x}}}
	\end{align*}
	and $\p/\norm{x}^{q-1}\in\partial\F(x/\norm{x})$ if and only if $\p\in\partial\F(x)$.
\end{prop}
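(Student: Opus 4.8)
The plan is to reduce the whole statement to a single exact scaling identity. Assume $x\neq 0$ (for $x=0$ the expressions $x/\norm{x}$ and $\norm{x}^q\delta,\norm{x}^q\rho$ are not meaningfully defined, and $\F(0)=0$ by positive $q$-homogeneity, so there is nothing to prove), and abbreviate $\bar x:=x/\norm{x}$ and $\bar\p:=\p/\norm{x}^{q-1}$. The heart of the argument is the identity
\begin{align*}
	\breg{\F}{\p}(y,x)=\norm{x}^q\,\breg{\F}{\bar\p}\!\paren{y/\norm{x},\,\bar x}\qquad\text{for all }y\in\X .
\end{align*}

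To establish it I would simply expand the definition of the linearization error functional and substitute three scaling relations: $\F(y)=\norm{x}^q\F(y/\norm{x})$ and $\F(x)=\norm{x}^q\F(\bar x)$ by positive $q$-homogeneity, and $\lsp\p,y-x\rsp=\norm{x}^{q-1}\lsp\bar\p,y-x\rsp=\norm{x}^q\lsp\bar\p,y/\norm{x}-\bar x\rsp$ by linearity of the duality pairing. Collecting the common factor $\norm{x}^q$ from all three terms gives exactly $\norm{x}^q\breg{\F}{\bar\p}(y/\norm{x},\bar x)$.

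With the identity in hand the sandwich is immediate. Since $\norm{\bar x}=1$, i.e. $\bar x\in S_\X$, and $\norm{y/\norm{x}-\bar x}=\norm{x-y}/\norm{x}=:\tau$, the point $y/\norm{x}$ lies on the sphere of radius $\tau$ around $\bar x$. Hence, directly from the definitions of the moduli $\delta_{\F,\bar x}^{\bar\p}$ and $\rho_{\F,\bar x}^{\bar\p}$ as the infimum and supremum of $\abs{\breg{\F}{\bar\p}(\cdot,\bar x)}$ over that sphere,
\begin{align*}
	\delta_{\F,\bar x}^{\bar\p}(\tau)\le\abs{\breg{\F}{\bar\p}\!\paren{y/\norm{x},\bar x}}\le\rho_{\F,\bar x}^{\bar\p}(\tau).
\end{align*}
Multiplying through by $\norm{x}^q\ge 0$ and applying the identity (together with $\abs{\norm{x}^q\,a}=\norm{x}^q\abs{a}$) yields the asserted two-sided bound.

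For the equivalence of the subdifferential memberships I would write out $\p\in\partial\F(x)$, namely $\F(z)\ge\F(x)+\lsp\p,z-x\rsp$ for all $z\in\X$, substitute $z=\norm{x}w$, apply $q$-homogeneity to $\F(z)$ and $\F(x)$, and use $\lsp\p,\norm{x}w-x\rsp=\norm{x}^q\lsp\bar\p,w-\bar x\rsp$ exactly as above; dividing by $\norm{x}^q>0$ turns the inequality into $\F(w)\ge\F(\bar x)+\lsp\bar\p,w-\bar x\rsp$ for all $w\in\X$, that is $\bar\p\in\partial\F(\bar x)$. Since $z\mapsto w=z/\norm{x}$ is a bijection of $\X$, every step is an equivalence and the converse needs no separate argument. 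I do not expect a genuine obstacle here; the only point that requires a little care is matching the homogeneity degrees — once $\p$ is normalised by the factor $\norm{x}^{q-1}$, the pairing $\lsp\p,y-x\rsp$ scales like $\norm{x}^q$, in step with $\F$ itself, which is precisely what makes all terms factor cleanly.
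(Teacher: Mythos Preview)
Your proof is correct and follows essentially the same approach as the paper: the paper states the scaling identity $\abs{\breg{\F}{\p}(y,x)}=\norm{x}^q\abs{\breg{\F}{\p/\norm{x}^{q-1}}(y/\norm{x},x/\norm{x})}$ and then appeals to the definitions of the moduli, and handles the subdifferential equivalence by multiplying the subgradient inequality by $\norm{x}^{\pm q}$, which is exactly what you do (only more explicitly).
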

\begin{proof}
	If $\F$ is positively $q$-homogeneous we have 
	\begin{align*}
		\abs{\breg{\F}{\p}(y,x)}=\norm{x}^q\abs{\breg{\F}{\p/\norm{x}^{q-1}}\paren{\frac{y}{\norm{x}},\frac{x}{\norm{x}}}},
	\end{align*}
	so that  the first claim follows from Definition \ref{defi:Fsmooth_Fconvex} . The second claim follows from multiplying \eqref{eq:subgradient} either by $\norm{x}^{q}$ or $\norm{x}^{-q}$.
\end{proof}
For convex functions $\F$ one can show that both moduli are nondecreasing.
\begin{prop} \label{prop:mods_nondecr}
		 Let $\F$ be convex, $x\in\X$ and $ \p\in\partial\F(x)$. Then for $\lambda\ge 1$ one has 
		\begin{align*}
			\rho_{\F,x}^\p(\lambda\tau)\ge \lambda \rho_{\F,x}^\p(\tau), \qquad 
			\delta_{\F,x}^\p(\lambda\tau)\ge \lambda \delta_{\F,x}^\p(\tau). 
		\end{align*}		
		 In particular $\delta_{\F,x}^\p, \rho_{\F,x}^\p$ are nondecreasing. 		
\end{prop}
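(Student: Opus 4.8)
The plan is to observe that everything follows from one elementary convexity fact about the linearization error viewed as a function of its first argument. Since $\p\in\partial\F(x)$, in particular $\F(x)$ is finite, the function $h(y):=\breg{\F}{\p}(y,x)=\F(y)-\F(x)-\lsp\p,y-x\rsp$ is nonnegative on $\X$, satisfies $h(x)=0$, and is convex (it is the convex function $\F$ plus an affine functional). Hence the absolute values in Definition \ref{defi:Fsmooth_Fconvex} are superfluous here, and
$\rho_{\F,x}^\p(\tau)=\sup_{\norm{x-y}=\tau}h(y)$, $\delta_{\F,x}^\p(\tau)=\inf_{\norm{x-y}=\tau}h(y)$.

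The only computation I would carry out is the following scaling inequality. Fix $\lambda\ge 1$. For any $y\in\X$ put $z:=x+\lambda(y-x)$; then $y=\paren{1-\tfrac1\lambda}x+\tfrac1\lambda z$, and convexity of $h$ together with $h(x)=0$ gives
\begin{align*}
	h(y)\le\paren{1-\tfrac1\lambda}h(x)+\tfrac1\lambda h(z)=\tfrac1\lambda h(z),
\end{align*}
that is, $h\paren{x+\lambda(y-x)}\ge\lambda h(y)$, and moreover $\norm{x-z}=\lambda\norm{x-y}$. (This stays valid in $[0,\infty]$ because the convex combination involves the finite value $h(x)=0$, which also forces $h(z)=\infty$ whenever $h(y)=\infty$.)

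Then I would read off the two estimates by applying this inequality in the two possible directions. For the smoothness bound, take any $y$ with $\norm{x-y}=\tau$; the point $z=x+\lambda(y-x)$ satisfies $\norm{x-z}=\lambda\tau$, hence $\rho_{\F,x}^\p(\lambda\tau)\ge h(z)\ge\lambda h(y)$, and taking the supremum over such $y$ yields $\rho_{\F,x}^\p(\lambda\tau)\ge\lambda\rho_{\F,x}^\p(\tau)$. For the convexity bound I use the same inequality backwards: given any $z$ with $\norm{x-z}=\lambda\tau$, set $y:=x+\tfrac1\lambda(z-x)$, so $\norm{x-y}=\tau$ and $h(z)\ge\lambda h(y)\ge\lambda\,\delta_{\F,x}^\p(\tau)$; taking the infimum over such $z$ gives $\delta_{\F,x}^\p(\lambda\tau)\ge\lambda\,\delta_{\F,x}^\p(\tau)$. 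Monotonicity is then immediate: for $0<\tau_1\le\tau_2$ apply the above with $\lambda=\tau_2/\tau_1\ge1$ and use $\rho_{\F,x}^\p,\delta_{\F,x}^\p\ge0$, while the case $\tau_1=0$ is trivial since both moduli vanish at $0$.

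I do not anticipate a real obstacle; the content is a one-line convexity argument. The only points needing a little care are choosing the correct direction of the affine reparametrisation for the $\sup$ versus the $\inf$, and making sure the convexity inequality is used in a form that tolerates the value $+\infty$ — which it does precisely because one of the two arguments of the convex combination, namely $h(x)=0$, is finite.
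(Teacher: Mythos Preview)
Your argument is correct and is essentially the same as the paper's: your point $z=x+\lambda(y-x)$ is precisely the paper's $y_\lambda$, and your $y=x+\tfrac{1}{\lambda}(z-x)$ is the paper's $\tilde{y}_\lambda$, with the same convexity inequality driving both estimates. The only cosmetic difference is that you package the computation via the convex function $h=\breg{\F}{\p}(\cdot,x)$ with $h(x)=0$, which makes the convex-combination step a bit more transparent and lets you handle the $+\infty$ case explicitly.
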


\begin{proof}
			 The idea is the same, as in \cite{Butnariu97}. Let $\lambda\ge 1$. For all $y\in\X, \norm{y-x}=\tau$ one can define 
			$y_\lambda=\lambda y+(1-\lambda)x$, so $ \norm{y_\lambda-x}=\lambda\tau.$ 
			Then by convexity of $\F$ we get
			\begin{align*}
				\frac{1}{\lambda}\breg{\F}{\p}(y_\lambda,x)=\frac{1}{\lambda}\big(\F(\lambda y+(1-\lambda)x)-\F(x)\big)-\lsp \p,y-x\rsp\ge \breg{\F}{\p}(y,x) .
			\end{align*}
			So for all $y\in\X, \norm{y-x}=\tau$ we find
			\begin{align*}
				\breg{\F}{\p}(y,x)\le \frac{1}{\lambda} \rho_{\F,x}^\p(\lambda\tau),
			\end{align*}
			which gives the first inequality.
			Similarly for all $y\in\X, \norm{y-x}=\lambda\tau$ one can define $\tilde{y}_\lambda =\tfrac{1}{\lambda}y+(1-\tfrac{1}{\lambda})x$, then $ \norm{\tilde{y}_\lambda-x}=\tau$  
			and again convexity of $\F$ can be used to show $\breg{\F}{\p}(y,x)\ge \lambda \breg{\F}{\p}(\tilde{y}_\lambda,x)$, which yields the other inequality.	
\end{proof}

We also have a chain rule. 
\begin{prop}\label{prop:chain_rule}
	Let $\f\colon \R\to\R$ and $x\in\X,\p\in\X^*,t\in\R$ be such that $\rho_{\f,\F(x)}^{t}$ is nondecreasing. Then for all $\tau\ge 0$ we have
	\begin{align*}
		\rho_{\f\circ\F,x}^{t\p}(\tau)\le \abs{t}\rho_{\F,x}^{\p}(\tau)+\rho_{\f,\F(x)}^{t}\paren{\norm{\p}\tau+\rho_{\F,x}^{\p}(\tau)}.
	\end{align*}
\end{prop}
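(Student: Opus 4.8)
The plan is to fix a direction $y\in S_\X$, reduce the statement to a one--dimensional linearization--error estimate for $\f$ combined with the linearization--error estimate for $\F$ that is already encoded in $\rho_{\F,x}^\p$, and then take the supremum over $y$. So fix $\tau\ge 0$ and $y\in S_\X$ and set $a:=\F(x)$, $b:=\F(x+\tau y)$. If $\rho_{\F,x}^\p(\tau)=\infty$ the asserted inequality is trivial (and in particular we may then assume $b$ is finite, so that $\f(b)$ is meaningful). The identity to exploit is
\begin{align*}
	\f(b)-\f(a)-\lsp t\p,\tau y\rsp=\big(\f(b)-\f(a)-t(b-a)\big)+t\big((b-a)-\lsp\p,\tau y\rsp\big),
\end{align*}
which decomposes the linearization error of $\f\circ\F$ at $x$ in direction $y$ into the linearization error of $\f$ at $a$ on the increment $b-a$, plus $t$ times the linearization error of $\F$ at $x$ in direction $y$.

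I would then bound the two summands separately. The second is immediate from $(b-a)-\lsp\p,\tau y\rsp=\breg{\F}{\p}(x+\tau y,x)$, which gives $\abs{t}\,\abs{(b-a)-\lsp\p,\tau y\rsp}\le\abs{t}\,\rho_{\F,x}^\p(\tau)$. For the first summand, I first read off from the same identity together with $\norm{y}=1$ that the increment is small,
\begin{align*}
	\abs{b-a}\le\abs{\lsp\p,\tau y\rsp}+\abs{\breg{\F}{\p}(x+\tau y,x)}\le\norm{\p}\tau+\rho_{\F,x}^\p(\tau);
\end{align*}
then, since the unit sphere of $\R$ is $\{\pm1\}$, the point $b=a+\abs{b-a}\sgn(b-a)$ is admissible in the definition of $\rho_{\f,a}^t(\abs{b-a})$, so $\abs{\f(b)-\f(a)-t(b-a)}\le\rho_{\f,a}^t(\abs{b-a})$, and by monotonicity of $\rho_{\f,\F(x)}^t$ together with the bound on $\abs{b-a}$ this is at most $\rho_{\f,a}^t\big(\norm{\p}\tau+\rho_{\F,x}^\p(\tau)\big)$.

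Adding the two bounds by the triangle inequality yields the claimed estimate for the fixed $y\in S_\X$, and taking $\sup_{y\in S_\X}$ finishes the proof. I do not expect a genuine obstacle here; the only points that need a little care are handling the degenerate case $\rho_{\F,x}^\p(\tau)=\infty$ separately and recognising that the monotonicity hypothesis on $\rho_{\f,\F(x)}^t$ is exactly what lets one replace the exact increment $\abs{b-a}$ by the uniform upper bound $\norm{\p}\tau+\rho_{\F,x}^\p(\tau)$ inside the argument of $\rho_{\f,\F(x)}^t$.
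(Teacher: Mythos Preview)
Your proof is correct and is essentially the same as the paper's: the paper introduces remainder functions $R(y)=\F(x+y)-\F(x)-\lsp\p,y\rsp$ and $r(h)=\f(s+h)-\f(s)-th$ with $s=\F(x)$ and substitutes, arriving at $\f\circ\F(x+\tau y)-\f\circ\F(x)-\lsp t\p,\tau y\rsp=tR(\tau y)+r(\lsp\p,\tau y\rsp+R(\tau y))$, which is exactly your decomposition with $b-a=\lsp\p,\tau y\rsp+R(\tau y)$. Your explicit treatment of the degenerate case $\rho_{\F,x}^\p(\tau)=\infty$ is a small extra care the paper omits.
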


\begin{proof}
	Let $s=\F(x)$ and define functions $R,r$ by
	\begin{align*}
		\F(x+y)-\F(x)&=\lsp\p,y\rsp+R(y) && \forall y\in\X \\
		\f(s+h)-\f(s)&=th+r(h) && \forall h\in\R.
	\end{align*}
	Then we have for $\tau>0$ and $y\in S_\X$ that
	\begin{align*}
		\f\circ\F(x+\tau y)-\f\circ\F(x)&=t\paren{\lsp \p,\tau y \rsp+R(y)}+r\paren{\lsp \p,\tau y \rsp+R(\tau y)} \\
		&=\lsp t\p,\tau y \rsp+tR(\tau y)+r\paren{\lsp \p,\tau y \rsp+R(\tau y)}.
	\end{align*}
	Now the claim follows from $R(\tau y)\le\rho_{\F,x}^{\p}(\tau)$ and $r(h)\le\rho_{\f,\F(x)}^{t}(\abs{h})$ together with the assumption that
	$\rho_{\f,\F(x)}^{t}$ is a nondecreasing function.
\end{proof}
Propositions \ref{prop:smoo_mods_equiv}, \ref{prop:mods_nondecr} and  \ref{prop:chain_rule} are already sufficient to find upper bounds on $\rho_{\F,x}^{\p}$ for $\F=\f\paren{\norm{x}_\X}$ if $f$ is convex and we sufficiently understand the smoothness of $\f$ and of the space $\X$. Regarding lower bounds the following proposition will be our key instrument.
\begin{prop}\label{prop:conv*=smoo}
	Let $\F$ convex and $x$ be such there exists $\p\in\partial\F(x)$. We have
	\begin{align} \label{eq:conv*=smoo}
		\paren{\delta_{\F,x}^\p}^*=\rho_{\F^*,\p}^x .
	\end{align}
	 Further we have that $\F$ is $p$-convex in $x$ w.r.t. $\p$ if and only if $\F^*$ is $p'$-smooth in $\p$ w.r.t. $x$.
\end{prop}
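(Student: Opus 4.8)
The plan is to exploit that, for each fixed dual direction, the linearization error of $\F^*$ at $\p$ with respect to $x$ is the Fenchel conjugate --- in the primal variable --- of the linearization error of $\F$ at $x$ with respect to $\p$. Since $\p\in\partial\F(x)$, the generalized Young inequality \eqref{eq:young} holds with equality, $\F(x)+\F^*(\p)=\lsp\p,x\rsp$, which makes $x\in\partial\F^*(\p)$; hence $\breg{\F}{\p}(\cdot,x)\ge0$ and $\breg{\F^*}{x}(\cdot,\p)\ge0$ and the absolute values in Definition \ref{defi:Fsmooth_Fconvex} may be dropped. Substituting the definition of $\F^*$ and using the Young equality one verifies, for every $\eta\in\X^*$,
\begin{align*}
	\breg{\F^*}{x}(\eta,\p)=\F^*(\eta)+\F(x)-\lsp x,\eta\rsp=\sup_{y\in\X}\left[\lsp\eta-\p,y-x\rsp-\breg{\F}{\p}(y,x)\right].
\end{align*}
Taking the supremum over $\norm{\eta-\p}=\sigma$, interchanging the two suprema, and using Hahn--Banach in the form $\sup_{\norm{\eta-\p}=\sigma}\lsp\eta-\p,y-x\rsp=\sigma\norm{y-x}$ gives $\rho_{\F^*,\p}^{x}(\sigma)=\sup_{y\in\X}\left[\sigma\norm{y-x}-\breg{\F}{\p}(y,x)\right]$. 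Grouping the points $y$ by the value $\tau=\norm{y-x}$ (each such sphere is non-empty because $\dim\X\ge2$) turns this into $\sup_{\tau\ge0}\left[\sigma\tau-\delta_{\F,x}^\p(\tau)\right]$, which is exactly $(\delta_{\F,x}^\p)^*(\sigma)$ once $\delta_{\F,x}^\p$ is extended by $+\infty$ on $(-\infty,0)$; this proves \eqref{eq:conv*=smoo}.

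For the power-type equivalence (for $p>1$, with $p'=p/(p-1)$) I would combine \eqref{eq:conv*=smoo} with Proposition \ref{prop:mods_nondecr} and the elementary fact that the Fenchel conjugate of $\tau\mapsto a\tau^p$ (extended by $+\infty$ on $(-\infty,0)$) is $\sigma\mapsto c\,\sigma^{p'}$ with a constant $c=c(a,p)>0$. If $\delta_{\F,x}^\p(\tau)\ge K\tau^p$ for $0<\tau\le\taubar$, then the scaling $\delta_{\F,x}^\p(\lambda\tau)\ge\lambda\delta_{\F,x}^\p(\tau)$ from Proposition \ref{prop:mods_nondecr} upgrades this to $\delta_{\F,x}^\p(\tau)\ge K\taubar^{p-1}\tau$ for $\tau\ge\taubar$; splitting $\sup_{\tau\ge0}[\sigma\tau-\delta_{\F,x}^\p(\tau)]$ at $\taubar$, the part over $\tau>\taubar$ is negative as soon as $\sigma<K\taubar^{p-1}$ while the part over $\tau\le\taubar$ is at most $\sup_{\tau\ge0}[\sigma\tau-K\tau^p]=c(K,p)\sigma^{p'}$, so $\rho_{\F^*,\p}^{x}(\sigma)\le c(K,p)\sigma^{p'}$ for small $\sigma$. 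Conversely, if $\rho_{\F^*,\p}^{x}(\sigma)\le K'\sigma^{p'}$ for $0<\sigma\le\overline{\sigma}$, then by \eqref{eq:doublestar} and \eqref{eq:conv*=smoo}
\begin{align*}
	\delta_{\F,x}^\p(\tau)\ge(\delta_{\F,x}^\p)^{**}(\tau)=(\rho_{\F^*,\p}^{x})^*(\tau)\ge\sup_{0\le\sigma\le\overline{\sigma}}\left[\tau\sigma-K'\sigma^{p'}\right],
\end{align*}
and for $\tau$ small the maximizing $\sigma$ lies in $[0,\overline{\sigma}]$, so the last supremum equals $c(K',p')\tau^p$; hence $\delta_{\F,x}^\p(\tau)\ge c(K',p')\tau^p$ near $0$.

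The one genuine subtlety --- and the step I expect to require the most care --- is that \eqref{eq:conv*=smoo} is a \emph{global} identity between the two moduli, whereas $p$-convexity and $p'$-smoothness are \emph{local} conditions at the origin, so one has to argue that the suprema defining the conjugates are governed by small arguments. In the direction ``$p$-convex $\Rightarrow$ $p'$-smooth'' this is precisely where Proposition \ref{prop:mods_nondecr} enters: it turns the local lower bound $K\tau^p$ into an at-least-linear global lower bound and thereby kills the large-$\tau$ contribution. In the reverse direction it is dealt with by passing to $(\delta_{\F,x}^\p)^{**}$ through \eqref{eq:doublestar}, which needs no convexity of $\delta_{\F,x}^\p$ itself, together with the observation that the $\sigma$ realizing $\sup_\sigma[\tau\sigma-K'\sigma^{p'}]$ tends to $0$ as $\tau\to0$. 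One must also keep track of the conventions at $\sigma=0$ and on $(-\infty,0)$, but there \eqref{eq:conv*=smoo} is immediate because $\delta_{\F,x}^\p(0)=0$ and $\delta_{\F,x}^\p\ge0$.
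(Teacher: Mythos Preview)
Your argument is correct and essentially coincides with the paper's own proof: you expand $\F^*$ by its definition, use the Young equality $\F(x)+\F^*(\p)=\lsp\p,x\rsp$, swap suprema and apply Hahn--Banach to obtain $\rho_{\F^*,\p}^{x}(\sigma)=\sup_{\tau\ge0}\left[\sigma\tau-\delta_{\F,x}^\p(\tau)\right]$, and for the power-type equivalence you invoke Proposition~\ref{prop:mods_nondecr} together with $\delta_{\F,x}^\p\ge(\delta_{\F,x}^\p)^{**}=(\rho_{\F^*,\p}^{x})^*$ exactly as the paper does. The only cosmetic differences are that you first isolate the identity $\breg{\F^*}{x}(\eta,\p)=\sup_{y}\left[\lsp\eta-\p,y-x\rsp-\breg{\F}{\p}(y,x)\right]$ before taking the sup over $\norm{\eta-\p}=\sigma$, and you spell out explicitly why the large-$\tau$ (resp.\ large-$\sigma$) contributions do not affect the conjugate for small arguments, whereas the paper records this more tersely.
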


\begin{proof}
We have
\begin{align*}
	\rho_{\F^*,\p}^x(\tau)&=\sup_{y^*\in S_{\X^*}}\left[\F^*(\p+\tau y^*)-\F^*(\p)-\lsp \tau y^*,x\rsp\right] \\
	&=\sup_{y^*\in S_{\X^*}}\sup_{y\in\X}\left[\lsp \p+\tau y^*,y \rsp-\F(y)-\F^*(\p)-\lsp \tau y^*,x\rsp\right] \\
	&=\sup_{y\in\X}\left[\lsp \p,y \rsp-\F(y)-\F^*(\p)+\tau\norm{y-x}\right].
\end{align*}	
By Youngs equality \eqref{eq:young} we then have
\begin{align*}
	\rho_{\F^*,\p}^x(\tau)
	&=\sup_{y\in\X}\left[\F(x)-\F(y) +\lsp \p,y-x \rsp+\tau\norm{y-x}\right] \\
	&=\sup_{\varepsilon\in\R_0^+}\sup_{y\in\X,\norm{y-x}=\varepsilon}\left[\varepsilon\tau-\breg{\F}{\p}(y,x)\right]=\paren{\delta_{\F,x}^\p}^*(\tau).
\end{align*}	
The second statement follows from \eqref{eq:conv*=smoo}, which gives that 
			\begin{align*}
				\rho_{\F^*,\p}^x=\paren{\delta_{\F,x}^\p}^*, \qquad \delta_{\F,x}^\p\ge \paren{\delta_{\F,x}^\p}^{**}=\paren{\rho_{\F^*,\p}^x}^*
			\end{align*}			 
			and the fact that by Proposition \ref{prop:mods_nondecr} we have for $\tau>\taubar$ that 
			$\rho_{\F,x}^\p(\tau)\ge \tau \rho_{\F,x}^\p(\taubar)/\taubar, \delta_{\F,x}^\p(\tau)\ge \tau \delta_{\F,x}^\p(\taubar)/\taubar$, so that in particular
			\begin{align*}
				\paren{\delta_{\F,x}^\p}^*(\tau^*)=\sup_{0\le\tau\le\taubar}\left[ \tau^*\tau-\delta_{\F,x}^\p(\tau)\right], \text{ for } \tau^*\le  \delta_{\F,x}^\p(\taubar)/\taubar \\
				\paren{\rho_{\F,x}^\p}^*(\tau^*)=\sup_{0\le\tau\le\taubar}\left[ \tau^*\tau-\rho_{\F,x}^\p(\tau)\right], \text{ for }  \tau^*\le \rho_{\F,x}^\p(\taubar)/\taubar.
			\end{align*}
			Thus one can just put in the corresponding lower or  upper bound and  calculate the maximum, which completes the proof.
\end{proof}

\section{Application to norm powers} \label{chap:norm_powers}

In this section we will consider $\F=\frac{1}{p}\norm{\cdot}^p$ for some $p>1$ and use the theory from the last chapter to reproduce the main results from \cite{Xu1991}. Note that in light of
Proposition \ref{prop:homo} it is sufficient to understand $\delta_{\F,x}^{j_p(x)}$ and $\rho_{\F,x}^{j_p(x)}$ for $x\in S_\X$.
\begin{thm} \label{thm:asy_xu_roach}
	For some fixed $p>1$ let $\F=\frac{1}{p}\norm{\cdot}^p$.
	\begin{enumerate}
		\item For all $\taubar>0$ exists a constant $C_{\taubar,p}>0$, such that for $x\in S_\X$ and $\tau\le \taubar$ we have \label{thm:asy_xu_roach1}
		\begin{align*} 
			\rho_{\F,x}^{j_p(x)}(\tau)\le C_{\taubar,p}\rho_{\X} (\tau)
		\end{align*} 
		\item If we have for $\taubar>0, \tau\le\taubar$ and all $x\in S_\X$  that \label{thm:asy_xu_roach2}
		\begin{align*}
			\rho_{\F,x}^{j_p(x)}(\tau)\le \phi(\tau),
		\end{align*}
		then
		\begin{align*}
			\rho_{\X}(\tau)\le p^{1/p-1}\phi(\tau)+C_{\taubar}\tau^2,
		\end{align*}
		for $\tau\le\taubar$. In particular if $\phi\colon \R^+\to\R^+$ fulfills $\lim_{\tau\to 0}\phi(\tau)/\tau=0$, then $\X$ is uniformly smooth.
		\item 
				Let $\tfrac{1}{p}+\tfrac{1}{p'}=1$. For all $x\in S_\X, \taubar>0$  we have  \label{thm:asy_xu_roach3}
							\begin{align*}
									\delta_{\F,x}^{j_p(x)}(\tau)\ge C_{\taubar,p'} \delta_{\X} (\tau/C_{\taubar,p'}), \qquad \tau \le C_{\taubar,p'}\rho_{\X^*}(\taubar)/\taubar
							\end{align*}
							where $C_{\taubar,p'}$ is the constant from \ref{thm:asy_xu_roach1}. and $\rho_{\X^*}(\taubar)/\taubar>0$.
		\item	\label{thm:asy_xu_roach5}
			If there exists $\taubar>0$ such that we have for all $x\in S_\X$ and $\tau\le \taubar$ that 
		\begin{align*} 
			\delta_{\F,x}^{j_p(x)}(\tau)\ge \phi(\norm{x-y}),
		\end{align*}
		where $\phi\colon \R^+\to\R^+$ is nondecreasing and $\phi(\tau)>0$ for $\tau>0$, then $\X$ is uniformly convex.
	\end{enumerate}
\end{thm}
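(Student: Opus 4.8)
The plan is to dispatch the four assertions one at a time, each time feeding the concrete structure $\F=\f\circ\norm{\cdot}_\X$ with $\f(t)=\tfrac{1}{p}\abs{t}^p$ into the general tools of Section~\ref{chap:moduli}. For~\ref{thm:asy_xu_roach1} I would apply the chain rule, Proposition~\ref{prop:chain_rule}, with $\F=\f\circ g$, $g=\norm{\cdot}_\X$: for $x\in S_\X$ one has $g(x)=1$, $\f'(1)=1$, $\norm{j_p(x)}_{\X^*}=1$ and $j_p(x)\in\partial g(x)$, while $\rho_{\f,1}^{1}$ is nondecreasing by Proposition~\ref{prop:mods_nondecr}, so that
\begin{align*}
\rho_{\F,x}^{j_p(x)}(\tau)\le\rho_{g,x}^{j_p(x)}(\tau)+\rho_{\f,1}^{1}\paren{\tau+\rho_{g,x}^{j_p(x)}(\tau)}.
\end{align*}
By Proposition~\ref{prop:smoo_mods_equiv} the first summand is at most $2\rho_\X(\tau)\le2\tau$, so the argument of $\rho_{\f,1}^{1}$ stays in $[0,3\taubar]$; a Taylor expansion of the one-dimensional $\f$ at the point $1$ shows $\rho_{\f,1}^{1}(s)\le C_{\taubar,p}s^2$ there, and $\tau^2\le\rho_\X(\tau)/C_{\taubar}$ for $\tau\le\taubar$ by Lemma~\ref{lem:mods_properties}\,\ref{lem:mods_properties4}; adding the estimates gives the bound.

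For~\ref{thm:asy_xu_roach2} I would argue directly. Fix $x,y\in S_\X$ and $\tau\le\taubar$; since $j_p(x)\in\partial\F(x)$, evaluating the supremum in $\rho_{\F,x}^{j_p(x)}(\tau)\le\phi(\tau)$ at $w=y$ and at $w=-y$ and adding the two inequalities makes the linear terms cancel and leaves
\begin{align*}
\tfrac{1}{p}\norm{x+\tau y}^p+\tfrac{1}{p}\norm{x-\tau y}^p-\tfrac{2}{p}\le2\phi(\tau).
\end{align*}
Convexity of $t\mapsto t^p$ turns this into $\bigl(\tfrac{1}{2}(\norm{x+\tau y}+\norm{x-\tau y})\bigr)^p\le1+p\phi(\tau)$, so taking the supremum over $x,y\in S_\X$ and using $(1+u)^{1/p}-1\le u/p$ yields $\rho_\X(\tau)\le\phi(\tau)$; in particular $\X$ is uniformly smooth as soon as $\phi(\tau)/\tau\to0$.

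For~\ref{thm:asy_xu_roach3} I would pass to the convex conjugate. By Proposition~\ref{prop:conv*=smoo} and \eqref{eq:doublestar}, $\delta_{\F,x}^{j_p(x)}\ge\bigl(\delta_{\F,x}^{j_p(x)}\bigr)^{**}=\bigl(\rho_{\F^*,j_p(x)}^{x}\bigr)^{*}$, where $\F^*=\tfrac{1}{p'}\norm{\cdot}_{\X^*}^{p'}$ and, by the equality case of \eqref{eq:young}, $x\in\partial\F^*(j_p(x))$; for $x\in S_\X$ moreover $j_p(x)\in S_{\X^*}$. Applying~\ref{thm:asy_xu_roach1} in the Banach space $\X^*$ with exponent $p'$ and with the duality selection taking $j_p(x)$ to $x$ gives $\rho_{\F^*,j_p(x)}^{x}(\tau)\le C_{\taubar,p'}\rho_{\X^*}(\tau)$ for $\tau\le\taubar$. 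Since $\rho_{\X^*}(\tau)/\tau$ is nondecreasing (Lemma~\ref{lem:mods_properties}\,\ref{lem:mods_properties1}), for $\sigma\le C_{\taubar,p'}\rho_{\X^*}(\taubar)/\taubar$ the Legendre transform restricted to $[0,\taubar]$ coincides with the true one, whence
\begin{align*}
\bigl(\rho_{\F^*,j_p(x)}^{x}\bigr)^{*}(\sigma)\ge\sup_{0\le\tau\le\taubar}\bigl[\sigma\tau-C_{\taubar,p'}\rho_{\X^*}(\tau)\bigr]=C_{\taubar,p'}\,\rho_{\X^*}^{*}\bigl(\sigma/C_{\taubar,p'}\bigr).
\end{align*}
Combined with $\rho_{\X^*}^{*}\ge\delta_\X$ — obtained by conjugating $(2\delta_\X)^{*}=2\rho_{\X^*}$ and using $\delta_\X^{**}(\tau)\ge\delta_\X(\tau/2)$, both from Lemma~\ref{lem:mods_properties}\,\ref{lem:mods_properties2},\,\ref{lem:mods_properties3} — this gives the assertion, and $\rho_{\X^*}(\taubar)/\taubar>0$ is Lemma~\ref{lem:mods_properties}\,\ref{lem:mods_properties4}.

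For~\ref{thm:asy_xu_roach5} I would first use Proposition~\ref{prop:mods_nondecr} to upgrade the hypothesis to $\delta_{\F,x}^{j_p(x)}(\tau)\ge\phi(\min(\tau,\taubar))=:\tilde\phi(\tau)$ for \emph{all} $\tau\ge0$ and $x\in S_\X$. Given $y,\tilde y\in S_\X$ with $\norm{y-\tilde y}=\varepsilon$, set $z=\tfrac{1}{2}(y+\tilde y)$; if $\norm{z}\le\tfrac{1}{2}$ then $1-\norm{z}\ge\tfrac{1}{2}$, and otherwise put $u=z/\norm{z}\in S_\X$, $\zeta=j_p(u)$, so that $\norm{\zeta}_{\X^*}=1$, $\lsp\zeta,u\rsp=1$ and $\breg{\F}{\zeta}(y,u)=1-\lsp\zeta,y\rsp\ge\tilde\phi(\norm{y-u})$, and similarly for $\tilde y$. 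Using $\norm{z}=\lsp\zeta,z\rsp=\tfrac{1}{2}(\lsp\zeta,y\rsp+\lsp\zeta,\tilde y\rsp)$, the triangle inequality $\max(\norm{y-u},\norm{\tilde y-u})\ge\tfrac{1}{2}\varepsilon$, and monotonicity of $\tilde\phi$, one obtains $\norm{z}\le1-\tfrac{1}{2}\tilde\phi(\varepsilon/2)$, hence $\delta_\X(\varepsilon)\ge\min\bigl(\tfrac{1}{2},\tfrac{1}{2}\tilde\phi(\varepsilon/2)\bigr)>0$ and $\X$ is uniformly convex. I expect the only genuine difficulty to lie in~\ref{thm:asy_xu_roach3}, namely in carefully tracking the intervals on which the restricted Legendre transforms agree with the genuine ones and on which $(2\delta_\X)^{*}=2\rho_{\X^*}$ may be inverted; the remaining parts become short once Section~\ref{chap:moduli} and Lemma~\ref{lem:mods_properties} are in hand.
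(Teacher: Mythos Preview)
Your proofs of parts~\ref{thm:asy_xu_roach1} and~\ref{thm:asy_xu_roach3} follow the paper's argument essentially verbatim: chain rule plus Proposition~\ref{prop:smoo_mods_equiv} for the upper bound, then dualisation via Proposition~\ref{prop:conv*=smoo} and Lemma~\ref{lem:mods_properties} for the lower bound.

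For parts~\ref{thm:asy_xu_roach2} and~\ref{thm:asy_xu_roach5}, however, you take genuinely different and more elementary routes. In~\ref{thm:asy_xu_roach2} the paper reverses the chain rule, writing $\norm{\cdot}=\f^{-1}\circ\F$ with $\f^{-1}(t)=(pt)^{1/p}$ concave, and then applies Proposition~\ref{prop:chain_rule} together with Proposition~\ref{prop:smoo_mods_equiv} to bound $\rho_\X$; you instead add the two Bregman inequalities at $\pm y$ and use Jensen for $t\mapsto t^p$, obtaining the clean bound $\rho_\X(\tau)\le\phi(\tau)$. Your inequality is not literally the one displayed in the statement (you lose the factor $p^{1/p-1}<1$ but also drop the additive $C_{\taubar}\tau^2$), yet it is fully sufficient for the ``in particular'' conclusion and arguably neater. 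In~\ref{thm:asy_xu_roach5} the paper again passes to the dual: it uses Proposition~\ref{prop:conv*=smoo} to bound $\rho_{\F^*,x^*}$ by $\tilde\phi^*$, invokes part~\ref{thm:asy_xu_roach2} to conclude that $\X^*$ is uniformly smooth, and then cites the classical equivalence with uniform convexity of $\X$. Your direct midpoint argument avoids the dual space entirely and even yields the explicit lower bound $\delta_\X(\varepsilon)\ge\tfrac{1}{2}\tilde\phi(\varepsilon/2)$ for small $\varepsilon$. The paper's route is more in keeping with its overall philosophy of deriving convexity statements by conjugating smoothness statements; your arguments are shorter and self-contained, at the cost of being specific to $\F=\tfrac{1}{p}\norm{\cdot}^p$ rather than illustrating the general machinery of Section~\ref{chap:moduli}.
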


\begin{proof}
	\emph{Claim} \ref{thm:asy_xu_roach1}:  Note that $\F=f\circ\norm{\cdot}$, with $f(t)=\tfrac{1}{p}t^p$, which is convex, thus $\rho_{\f,\F(x)}^{1}$ is nondecreasing by Proposition \ref{prop:mods_nondecr}, so Proposition \ref{prop:chain_rule} gives
	\begin{align*}
		\rho_{\F,x}^{j_p(x)}(\tau)\le 
		\rho_{\norm{\cdot},x}^{j_p(x)}(\tau)+\rho_{\f,\F(x)}^{1}\paren{\tau+\rho_{\norm{\cdot},x}^{j_p(x)}(\tau)}.
	\end{align*}
	We have by Taylor's theorem 
	\[\rho_{\f,1}^{1}(\tau)=\sup_{\sigma\in\{-1,+1\}}\frac{p-1}{2}\tau^2+ r(\sigma\tau)\tau^2\le C \tau^2, \text{ for } \tau\le 3\taubar
	,\] 
	where the second inequality holds as $\rho_{\f,1}^{1}$ is always finite and so is the remainder $r$.
	We have $j_p(x)\in\partial\norm{\cdot}(x)$ for $x\in S_\X$, so by Proposition \ref{prop:smoo_mods_equiv} we have 
	$\rho_{\norm{\cdot},x}^{j_p(x)}(\tau)\le 2\rho_\X(\tau)$   and one can easily see that $\rho_\X(\tau)\le \tau$. So we have
	\begin{align*}
		\rho_{\F,x}^{j_p(x)}(\tau)\le 2\rho_\X(\tau)+9C\tau^2\le (2+9C/C_\tau)\rho_\X(\tau), \qquad \tau\le \taubar
	\end{align*}
	where the second inequality follows from Lemma \ref{lem:mods_properties}, \ref{lem:mods_properties4}.
	
	\emph{Claim} \ref{thm:asy_xu_roach2}: Note that $\norm{\cdot}=f^{-1}\circ\F$ and $f^{-1}(t)=\paren{pt}^\frac{1}{p}$ is concave, thus $-f^{-1}$ is convex and it is differentiable,
	so $-1\in\partial\paren{-f^{-1}}\paren{\tfrac{1}{p}}$ and by Proposition \ref{prop:mods_nondecr} $\rho_{\f^{-1},1/p}^{1}=\rho_{-\f^{-1},1/p}^{-1}$ is nondecreasing.
	Then Proposition \ref{prop:chain_rule} gives
	for all $x\in S_\X$ that
	\begin{align*}
		\rho_{\norm{\cdot},x}^{j_p(x)}(\tau)\le \rho_{\F,x}^{j_p(x)}(\tau)+ \rho_{\f^{-1},1/p}^{1}\paren{\tau+\rho_{\F,x}^{j_p(x)}(\tau)}\le \phi(\tau)+C_{\taubar}\tau^2,
	\end{align*}
	where the second inequality follows by Taylors theorem as above and the fact that by Claim \ref{thm:asy_xu_roach1} we always have $\rho_{\F,x}^{j_p(x)}(\tau)\le C\tau$ for some $C>0$.
	Thus Proposition \ref{prop:smoo_mods_equiv} gives the claim.
	
	 \emph{Claim} \ref{thm:asy_xu_roach3}: First of all note that $\F^*(t)=\tfrac{1}{p'}t^{p'}$, with $\tfrac{1}{p}+\tfrac{1}{p'}=1$. We have  
	\begin{align*}
		\delta_{\F,x}^{j_p(x)}(\tau)\ge \paren{\delta_{\F,x}^{j_p(x)}}^{**}(\tau)=\paren{\rho_{\F^*,j_p(x)}^{x}}^*(\tau)
		=\sup_{r\ge 0}\left[\tau r- \rho_{\F^*,j_p(x)}^{x}(r)\right].
	\end{align*}
	By Claim \ref{thm:asy_xu_roach1} we have for all $x\in S_\X$ that  $\rho_{\F^*,j_p(x)}^{x}(r)\le C_{\taubar,p'}\rho_{\X^*}(r)$ for all $0<r<\taubar$. 
	We are only interested in the case $\tau\to 0$ so let $\tau\le C_{\taubar,p'}\rho_{\X^*}(\taubar)/\taubar$, where $\rho_{\X^*}(\taubar)/\taubar >0$ by Lemma \ref{lem:mods_properties}, \ref{lem:mods_properties4}.
	Then by  Lemma \ref{lem:mods_properties}, \ref{lem:mods_properties1}. we 
	have  $\tau r\le C_{\taubar,p'}\rho_{\X^*}(r)$ for $r\ge \taubar$ and thus find
	\begin{align*}
		\sup_{0\le r}\left[\tau r- \rho_{\F^*,j_p(x)}^{x}(r)\right]\ge \sup_{0\le r\le \taubar}\left[\tau r- C_{\taubar,p'} \rho_{\X^*}(r)\right]=\paren{C\rho_{\X^*}}^*(\tau).
	\end{align*}	
	So we have by Lemma \ref{lem:mods_properties}, \ref{lem:mods_properties2} and \ref{lem:mods_properties3}, that
	\begin{align*}
		\delta_{\F,x}^{j_p(x)}(\tau)\ge\paren{C_{\taubar,p'}\rho_{\X}}^*(\tau)=\frac{C_{\taubar,p'}}{2}\paren{2\delta_{\X}}^{**}\paren{\frac{2\tau}{C_{\taubar,p'}}}
		\ge C_{\taubar,p'}\paren{\delta_{\X}}\paren{\frac{\tau}{C_{\taubar,p'}}}.
	\end{align*}
	
	\emph{Claim} \ref{thm:asy_xu_roach5}:  By assumption we have by $\delta_{\F,x}^{j_p(x)}(\tau)\ge \phi(\tau)$ for $\tau\le \taubar$ and by Proposition \ref{prop:mods_nondecr} we
	have for $\tau>\taubar $ that  $ \delta_{\F,x}^{j_p(x)}(\tau)\ge \tau\delta_{\F,x}^{j_p(x)}(\taubar)/\taubar$ and thus $\delta_{\F,x}^{j_p(x)}(\tau)\ge\tilde{\phi}(\tau)$ with
	\begin{align*}
		\tilde{\phi}(\tau):=\begin{cases}
						\phi(\tau), &\tau \le \taubar,\\
						\tau\phi(\taubar)/\taubar, &\tau > \taubar .	
					\end{cases}
	\end{align*}
	So by Proposition \ref{prop:conv*=smoo} we have
	for all $x^*\in S_{\X^*}$ that
	\begin{align*}
		\rho_{\F^*,x^*}^{j_p^*(x^*)}(\tau)=\paren{\delta_{\F,j_p^*(x^*)}^{x^*}}^*(\tau)\le \tilde{\phi}^*(\tau).
	\end{align*}
	Now just observe that for $\tau<\phi(\taubar)/\taubar$ we have
	\begin{align*}
		\frac{\tilde{\phi}^*(\tau)}{\tau}=\sup_{0\le t}\left[ t-\frac{\tilde{\phi}(t)}{\tau}\right]=\sup_{0\le t\le \taubar}\left[ t-\frac{\phi(t)}{\tau}\right]\to 0, \,\tau\to 0,
	\end{align*}
	as $\phi$ is nondecreasing. So by part \ref{thm:asy_xu_roach2} of the theorem we get that $\X^*$ is uniformly smooth from which it follows that $\X$ is uniformly convex \cite[Prop. 1.e.2]{Lindenstrauss1979}.
\end{proof} 

\begin{rem}
	One can see from the above proof, that in the asymptotic case $\taubar\to 0$ one can choose the constant $C_{\taubar,p}$ such that
	\begin{align*}
		C_{\taubar,p}\to 
		\begin{cases}
			2, &\X \text{ is not 2-smooth} \\
			1+p, &\X \text{ is 2-smooth}.
		\end{cases}
	\end{align*}
	These constants are not sharp for every space $\X$, but atleast in the asymptotic case the constants are much simpler than the ones given in \cite{Xu1991}. For best known constants with respect to $L^p$ spaces we refer to \cite{Xu89} and \cite{Xu94}.
\end{rem}
The above theorem combined with Proposition \ref{prop:homo} gives us upper and lower bounds on the Bregman divergence for $\norm{x-y}\le\taubar \norm{x}$. However as for large $\norm{x-y}$ the Bregman divergence will be dominated by the term $\norm{y}^p$ it is not difficult to also find bounds that hold for all $x,y\in X$. Further one can also easily conclude bounds for the symmetric Bregman divergence,
\begin{align*}
	\symbreg{\F}(x,y):=\breg{\F}{j_p(x)}(y,x)+\breg{\F}{j_p(y)}(x,y)=\lsp j_p(x)-j_p(y),x-y\rsp,
\end{align*}
from our theorem. These two claims are shown in the following two propositions.
\begin{prop} \label{prop:uni_sym_upper}
	For some fixed $p>1$ let $\F=\frac{1}{p}\norm{\cdot}^p$ and let $\phi \colon  \R^+\to\R^+$ be nondecreasing. Let $V=\X\setminus \{0\} \times \X$ and define the statements:
		\begin{align}
			\label{eq:unsymmetric_bound} \tag{a}	&\exists C,c>0 \forall (x,y)\in V, \norm{x-y}\le c\norm{x} :  \breg{\F}{j_p(x)}(y,x)\le C\norm{x}^p\phi\paren{\tfrac{\norm{x-y}}{\norm{x}}} \\
			\label{eq:symmetric_bound} \tag{b}	&\exists C>0 \forall (x,y)\in V :  \symbreg{\F}(x,y)\le C \max\{\norm{x},\norm{y}\}^p \phi\paren{\tfrac{2\norm{x-y}}{\max\{\norm{x},\norm{y}\}}} \\
			\label{eq:uniform_bound} \tag{c} &\exists C>0 \forall (x,y)\in V :  \breg{\F}{j_p(x)}(y,x)\le C \max\{\norm{x},\norm{y}\}^p \phi\paren{\tfrac{2\norm{x-y}}{\max\{\norm{x},\norm{y}\}}}
		\end{align}
	Then \eqref{eq:unsymmetric_bound} $\Rightarrow$  \eqref{eq:symmetric_bound} $\Rightarrow$ \eqref{eq:uniform_bound}. 
	Obviously one also has \eqref{eq:uniform_bound} $\Rightarrow$ \{\eqref{eq:unsymmetric_bound} with $\phi$ replaced by $\phi(2\cdot)$\}.
\end{prop}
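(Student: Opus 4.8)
The plan is to prove the two substantive implications \eqref{eq:unsymmetric_bound}$\Rightarrow$\eqref{eq:symmetric_bound}$\Rightarrow$\eqref{eq:uniform_bound} and to dispatch the remaining ones in a line each. I would start with the easy side. Since $j_p(y)\in\partial\F(y)=J_p(y)$, the Bregman divergence $\breg{\F}{j_p(y)}(x,y)$ is nonnegative, so
\[
\breg{\F}{j_p(x)}(y,x)\le\breg{\F}{j_p(x)}(y,x)+\breg{\F}{j_p(y)}(x,y)=\symbreg{\F}(x,y),
\]
and \eqref{eq:symmetric_bound} implies \eqref{eq:uniform_bound} verbatim. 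For \eqref{eq:uniform_bound}$\Rightarrow$\{\eqref{eq:unsymmetric_bound} with $\phi$ replaced by $\phi(2\cdot)$\} I would take $c=\tfrac12$: if $\norm{x-y}\le\tfrac12\norm{x}$, then $\norm{x}\le\max\{\norm{x},\norm{y}\}\le\tfrac32\norm{x}$, so the power factor in \eqref{eq:uniform_bound} is at most $(\tfrac32)^p\norm{x}^p$ and, by monotonicity of $\phi$, $\phi(2\norm{x-y}/\max\{\norm{x},\norm{y}\})\le\phi(2\norm{x-y}/\norm{x})$, which is the claim.

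The heart of the matter is \eqref{eq:unsymmetric_bound}$\Rightarrow$\eqref{eq:symmetric_bound}. Here I would split $\symbreg{\F}(x,y)=\breg{\F}{j_p(x)}(y,x)+\breg{\F}{j_p(y)}(x,y)$ and bound each summand via the single estimate: there is $K>0$ with
\[
\breg{\F}{j_p(v)}(u,v)\le K\,N^p\,\phi\!\left(\tfrac{2\norm{u-v}}{N}\right),\qquad N:=\max\{\norm{u},\norm{v}\},
\]
for all $(u,v)\ne(0,0)$; applying this with $(u,v)=(y,x)$ and with $(u,v)=(x,y)$ and adding yields \eqref{eq:symmetric_bound} with constant $2K$. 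In proving the displayed estimate I may assume the constant $c$ in \eqref{eq:unsymmetric_bound} satisfies $c\le1$, since \eqref{eq:unsymmetric_bound} a fortiori holds on any smaller neighbourhood $\norm{x-y}\le c'\norm{x}$. Case 1: $\norm{u-v}\le c\norm{v}$. Then $v\ne0$, and the triangle inequality gives $N\le(1+c)\norm{v}\le2\norm{v}$; now \eqref{eq:unsymmetric_bound} (with $x=v$, $y=u$) together with $\norm{v}\le N\le2\norm{v}$ and the monotonicity of $\phi$ gives the claim with $K=C$. Case 2: $\norm{u-v}>c\norm{v}$. Here I use the crude bound coming from $\norm{j_p(v)}=\norm{v}^{p-1}$ and $\norm{u-v}\le2N$,
\[
\breg{\F}{j_p(v)}(u,v)=\tfrac1p\norm{u}^p-\tfrac1p\norm{v}^p-\lsp j_p(v),u-v\rsp\le\tfrac1p\norm{u}^p+\norm{v}^{p-1}\norm{u-v}\le\paren{\tfrac1p+2}N^p,
\]
while a short computation (distinguishing $\norm{u}\ge\norm{v}$ from $\norm{v}\ge\norm{u}$) shows $2\norm{u-v}/N\ge c$, so $\phi(2\norm{u-v}/N)\ge\phi(c)>0$ and the estimate holds with $K=(\tfrac1p+2)/\phi(c)$. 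Taking the larger of the two values of $K$ completes it.

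I expect the only delicate point to be the bookkeeping in Case 2: one must check that once $\norm{u-v}$ is no longer small compared with $\norm{v}$, the argument $2\norm{u-v}/N$ of $\phi$ is bounded below by a fixed positive constant, so that the crude power estimate can be absorbed into $\phi$ via $\phi(c)>0$. This is exactly where the hypotheses that $\phi$ is nondecreasing and strictly positive on $(0,\infty)$ enter; apart from that, the argument uses only the triangle inequality, the identity $\norm{j_p(v)}=\norm{v}^{p-1}$, and nonnegativity of the Bregman divergence, with no convexity or conjugation input.
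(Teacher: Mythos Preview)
Your proposal is correct and follows essentially the same route as the paper: a case split according to whether $\norm{x-y}$ is small or large compared with the base point, applying \eqref{eq:unsymmetric_bound} directly in the near case and absorbing a crude $N^p$ bound into $\phi$ via $\phi(c)>0$ in the far case. Your organization---bounding each summand $\breg{\F}{j_p(v)}(u,v)$ separately and then adding---is a slightly tidier variant of the paper's argument, and your explicit verification that $2\norm{u-v}/N\ge c$ in Case~2 matches the paper's corresponding computation.
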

\begin{proof}	
	We only show that \eqref{eq:unsymmetric_bound} implies \eqref{eq:symmetric_bound} as \eqref{eq:symmetric_bound} $\Rightarrow$ \eqref{eq:uniform_bound}
	follows trivially. Without loss of generality let $c\le 1$. First of all assume $\frac{\norm{x-y}}{\norm{x}}>c$. Then by 
	\begin{align*}
		\frac{\norm{x-y}}{\norm{x}}\frac{\norm{x}}{\norm{y}}=\frac{\norm{x-y}}{\norm{y}}\ge \frac{\norm{y}-\norm{x}}{\norm{y}} \ge 1- \frac{\norm{x}}{\norm{y}}
	\end{align*}
	one can see that no matter if we have $\norm{x}/\norm{y}>1/2$ or$\norm{x}/\norm{y}\le 1/2$ one always has $\frac{2\norm{x-y}}{\norm{y}}>c$. So by 
	\begin{align*}
		\symbreg{\F}(x,y)=\lsp j_p(x)-j_p(y),x-y\rsp &\le \norm{x}^p+\norm{y}^p+\norm{x}^{p-1}\norm{y}+\norm{y}^{p-1}\norm{x} \\
		 &\le 4\max\{\norm{x},\norm{y}\}^p
	\end{align*}
	we find that \[\symbreg{\F}(x,y)\le \frac{4}{\phi\paren{c}}\max\{\norm{x},\norm{y}\}^p \phi\paren{\frac{2\norm{x-y}}{\max\{\norm{x},\norm{y}\}}}.\]
	
	Now consider the case $\norm{x-y}/\norm{x}\le c\le 1$. We can conclude that $\norm{y}\le 2\norm{x}$, so that 
	\[\phi\paren{\frac{\norm{x-y}}{\norm{x}}}\le \phi\paren{\frac{2\norm{x-y}}{\norm{y}}}\] so by $\eqref{eq:unsymmetric_bound}$ we see that \eqref{eq:symmetric_bound} holds true.
\end{proof}

\begin{prop} \label{prop:uni_sym_lower}
	For some fixed $p>1$ let $\F=\frac{1}{p}\norm{\cdot}^p$ and let $\phi\colon \R^+\to\R^+$ be nondecreasing and $\phi(\tau)>0$ for $\tau>0$. 
	Let $V=\X\setminus \{0\} \times \X$ and define the statements:
		\begin{align}
			\label{eq:unsymmetric_lowerbound} \tag{d}	&\exists C,c>0 \forall (x,y)\in V, \norm{x-y}\le c\norm{x} :  \breg{\F}{j_p(x)}(y,x)\ge C\norm{x}^p\phi\paren{\tfrac{\norm{x-y}}{\norm{x}}} \\
			\label{eq:uniform_lowerbound} \tag{e}	&\exists C>0 \forall(x,y)\in V :  \breg{\F}{j_p(x)}(y,x)\ge C \max\{\norm{x},\norm{y}\}^p \phi\paren{\tfrac{\norm{x-y}}{\max\{\norm{x},\norm{y}\}}} \\
			\label{eq:symmetric_lowerbound} \tag{f}	&\exists C>0 \forall (x,y)\in V :  \symbreg{\F}(x,y)\ge C \max\{\norm{x},\norm{y}\}^p \phi\paren{\tfrac{\norm{x-y}}{\max\{\norm{x},\norm{y}\}}} 
		\end{align}
		Then \eqref{eq:unsymmetric_lowerbound} $\Rightarrow$ \eqref{eq:uniform_lowerbound} $\Rightarrow$ \eqref{eq:symmetric_lowerbound}.
\end{prop}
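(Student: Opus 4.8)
The plan is to mimic the proof of Proposition~\ref{prop:uni_sym_upper}. The implication \eqref{eq:uniform_lowerbound}~$\Rightarrow$~\eqref{eq:symmetric_lowerbound} comes for free: since $\breg{\F}{j_p(y)}(x,y)\ge 0$ we have $\symbreg{\F}(x,y)=\breg{\F}{j_p(x)}(y,x)+\breg{\F}{j_p(y)}(x,y)\ge\breg{\F}{j_p(x)}(y,x)$, so \eqref{eq:uniform_lowerbound} yields \eqref{eq:symmetric_lowerbound} with the same constant. The work is in \eqref{eq:unsymmetric_lowerbound}~$\Rightarrow$~\eqref{eq:uniform_lowerbound}. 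The difference to the upper-bound situation is that there the crude bound $\symbreg{\F}(x,y)\le 4\max\{\norm{x},\norm{y}\}^p$ disposed of the whole region $\norm{x-y}>c\norm{x}$ at once; here there is no universal positive lower bound of this form, so I would split that region further, ending up with three cases.

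Assume (after possibly shrinking $c$) that $c\le 1$, fix $(x,y)\in V$, and put $\tau=\norm{x-y}/\norm{x}$ and $m=\max\{\norm{x},\norm{y}\}$, so $m\ge\norm{x}>0$ and $\norm{x-y}/m\le\tau$. If $\tau\le c$, I would argue similarly to Proposition~\ref{prop:uni_sym_upper}: \eqref{eq:unsymmetric_lowerbound} gives $\breg{\F}{j_p(x)}(y,x)\ge C\norm{x}^p\phi(\tau)$, while $\norm{y}\le(1+\tau)\norm{x}\le 2\norm{x}$ and monotonicity of $\phi$ give $m^p\phi(\norm{x-y}/m)\le 2^p\norm{x}^p\phi(\tau)$, so the bound holds with constant $C/2^p$. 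If $\tau>c$ and $\norm{y}>M\norm{x}$ (for a constant $M\ge 1$, fixed below), then $m=\norm{y}$, and from the elementary identity $\breg{\F}{j_p(x)}(y,x)=\tfrac1p\norm{y}^p+\tfrac{p-1}{p}\norm{x}^p-\lsp j_p(x),y\rsp$ together with $\abs{\lsp j_p(x),y\rsp}\le\norm{j_p(x)}\norm{y}=\norm{x}^{p-1}\norm{y}$ and $\norm{x}<\norm{y}/M$ one gets $\breg{\F}{j_p(x)}(y,x)\ge(\tfrac1p-M^{1-p})\norm{y}^p\ge\tfrac1{2p}\norm{y}^p$ once $M\ge(2p)^{1/(p-1)}$; since $\norm{x-y}/m\le 1+\norm{x}/\norm{y}\le 2$ and $\phi$ is nondecreasing, $m^p\phi(\norm{x-y}/m)\le\phi(2)\norm{y}^p$, so the bound holds with constant $1/(2p\,\phi(2))$.

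The remaining case $c<\tau$ and $\norm{y}\le M\norm{x}$ is the crux and, I expect, the only real obstacle. Using only \eqref{eq:unsymmetric_lowerbound} and $\phi(c)>0$, taking the infimum over all $y'$ with $\norm{x-y'}=c\norm{x}$ gives $\delta_{\F,x}^{j_p(x)}(c\norm{x})\ge C\phi(c)\norm{x}^p=:c_2\norm{x}^p>0$; since $\delta_{\F,x}^{j_p(x)}$ is nondecreasing by Proposition~\ref{prop:mods_nondecr} and $\norm{x-y}\ge c\norm{x}$, this forces $\breg{\F}{j_p(x)}(y,x)\ge\delta_{\F,x}^{j_p(x)}(\norm{x-y})\ge c_2\norm{x}^p$, whereas $m\le M\norm{x}$ and $\norm{x-y}/m\le\tau\le 1+M$ give $m^p\phi(\norm{x-y}/m)\le M^p\phi(1+M)\norm{x}^p$, so the bound holds with $c_2/(M^p\phi(1+M))$. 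Taking for $C$ in \eqref{eq:uniform_lowerbound} the minimum of the three constants, which is positive because $\phi>0$ on $\R^+$, then completes the proof. The point of this middle case is that nonnegativity of the Bregman divergence and the explicit form of $\F$ are not enough there, and that the positivity of $\phi$ in \eqref{eq:unsymmetric_lowerbound} is genuinely exploited: via Proposition~\ref{prop:mods_nondecr} one propagates the small-$\tau$ positivity to a uniform lower bound $c_2\norm{x}^p$ valid on the whole range $\norm{x-y}\ge c\norm{x}$ --- for an $\X$ that is not strictly convex such a bound fails, consistently with \eqref{eq:unsymmetric_lowerbound} being unavailable there. In contrast to Proposition~\ref{prop:homo}, no rescaling to the unit sphere is needed.
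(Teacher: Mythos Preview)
Your proof is correct and follows essentially the same strategy as the paper: the implication \eqref{eq:uniform_lowerbound}$\Rightarrow$\eqref{eq:symmetric_lowerbound} is declared trivial there too, and for \eqref{eq:unsymmetric_lowerbound}$\Rightarrow$\eqref{eq:uniform_lowerbound} the paper likewise splits into three regimes, using Proposition~\ref{prop:mods_nondecr} to propagate the bound $C\norm{x}^p\phi(c)$ in the middle range and the explicit form of $\breg{\F}{j_p(x)}(y,x)$ for large $\norm{y}$. The only cosmetic difference is that the paper parameterizes the outer two cases by a threshold $N$ on $\tau=\norm{x-y}/\norm{x}$ rather than your threshold $M$ on $\norm{y}/\norm{x}$, which is equivalent; your write-up is in fact more detailed than the paper's sketch.
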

\begin{proof}
	The proof is very similar to the previous proof so we just sketch it.  We look at three different cases.
	By  Proposition \ref{prop:mods_nondecr} we know that  $\delta_{\F,x}^{j_p(x)}$ is nondecreasing, so \eqref{eq:unsymmetric_lowerbound} gives also for 
	$\norm{x-y}/\norm{x}\ge c$ that $\breg{\F}{j_p(x)}(y,x)\ge C\norm{x}^p\phi(c)$ and thus
	\begin{align*}
		 \breg{\F}{j_p(x)}(y,x)\ge\begin{cases}
						C\norm{x}^p\phi\paren{\frac{\norm{x-y}}{\norm{x}}}, & \frac{\norm{x-y}}{\norm{x}} \le c,\\
						C\norm{x}^p\phi(c), &c \le \frac{\norm{x-y}}{\norm{x}}<		N, \\
						C_{p,\phi,N}\norm{y}^p\phi\paren{\frac{\norm{x-y}}{\norm{y}}}, &N \le \frac{\norm{x-y}}{\norm{x}},		
					\end{cases}
	\end{align*}
	for sufficiently large $N>3$, where the last line follows from the definition of the Bregman divergence and the fact that $\frac{\norm{x-y}}{\norm{x}}\to \infty$ implies $\norm{y}\to \infty$ implies $\frac{\norm{x-y}}{\norm{y}}\to 1$. To conclude \eqref{eq:uniform_lowerbound} one then basically has to redefine the constants. \eqref{eq:symmetric_lowerbound} follows trivially.
\end{proof}

To conclude this chapter we combine the results and summarize the most important inequalities.
\begin{cor}	Let $\X$ be a Banach space and $\F(x)=\tfrac{1}{p}\norm{x}^p$ for $p>1$ then there exists constants
$C_1,C_2>0$ such that for all $x,y\in\X$ we have
\begin{align}
	\breg{\F}{j_p(x)}(y,x)\le C_1 \max\{\norm{x},\norm{y}\}^p \rho_\X\paren{\frac{2\norm{x-y}}{\max\{\norm{x},\norm{y}\}}}
\end{align}
and
\begin{align}\label{eq:final_lower_bound}
	\breg{\F}{j_p(x)}(y,x)\ge C_2 \max\{\norm{x},\norm{y}\}^p \delta_\X\paren{\frac{\norm{x-y}}{3\max\{\norm{x},\norm{y}\}}}.
\end{align}
If the space $\X$ is $\smoo$-smooth, then there exists $C>0$ and  for all $\taubar>0$ also $C_{\taubar}>0$ such that
\begin{align}
\begin{split}
	\breg{\F}{j_p(x)}(y,x)\le 
	\begin{cases}	
	C\norm{x-y}^\smoo , & p=\smoo \\
	C_{\taubar}\norm{x}^{p-\smoo}\norm{x-y}^\smoo,\text{ for }\frac{\norm{x-y}}{\norm{x}}\le \taubar, & p\neq \smoo .
	\end{cases}
	\end{split}
\end{align}
If the space $\X$ is $\conv$-convex, then there exists $\tilde{C}>0$ and for all $\taubar>0$ also $\tilde{C}_{\taubar}>0$ such that
\begin{align}
\begin{split}
	\breg{\F}{j_p(x)}(y,x)\ge 
	\begin{cases}	
	\tilde{C}\norm{x-y}^\conv , & p=\conv \\
	\tilde{C}_{\taubar}\norm{x}^{p-\conv}\norm{x-y}^\conv,\text{ for }\frac{\norm{x-y}}{\norm{x}}\le \taubar, & p\neq \conv .
	\end{cases}	
		\end{split}
\end{align}
\end{cor}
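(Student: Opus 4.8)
The plan is to obtain all four displayed inequalities by assembling results already in hand — Theorem~\ref{thm:asy_xu_roach}, the homogeneity rescaling of Proposition~\ref{prop:homo}, and the globalization Propositions~\ref{prop:uni_sym_upper} and~\ref{prop:uni_sym_lower} — and then to read off the two power‑type statements by substituting the definitions of $\smoo$‑smoothness and $\conv$‑convexity into the first two inequalities. First, for the general upper bound: since $\F=\tfrac1p\norm\cdot^p$ is positively $p$‑homogeneous, Proposition~\ref{prop:homo} (with $q=p$) turns the local estimate of part~\ref{thm:asy_xu_roach1} of Theorem~\ref{thm:asy_xu_roach} into
\[
	\breg{\F}{j_p(x)}(y,x)\le C_{\taubar,p}\,\norm x^p\,\rho_\X\!\paren{\tfrac{\norm{x-y}}{\norm x}}\qquad\text{for }x\neq 0,\ \norm{x-y}\le\taubar\norm x,
\]
where we use that $j_p(x)/\norm x^{p-1}$ is an admissible value of the duality mapping at $x/\norm x\in S_\X$ by the second claim of Proposition~\ref{prop:homo}. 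Since $\rho_\X$ is nondecreasing (a consequence of Lemma~\ref{lem:mods_properties}\,\ref{lem:mods_properties1}), this is exactly hypothesis~\eqref{eq:unsymmetric_bound} of Proposition~\ref{prop:uni_sym_upper} with $\phi=\rho_\X$, and its conclusion~\eqref{eq:uniform_bound} is the first displayed inequality of the corollary. The case $x=0$, which Proposition~\ref{prop:uni_sym_upper} excludes, is checked by hand: there $\breg{\F}{0}(y,0)=\tfrac1p\norm y^p$ while $\rho_\X(2)>0$ by Lemma~\ref{lem:mods_properties}\,\ref{lem:mods_properties4}, so the inequality holds once $C_1$ is chosen large enough.

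For the general lower bound one argues symmetrically: part~\ref{thm:asy_xu_roach3} of Theorem~\ref{thm:asy_xu_roach} combined with Proposition~\ref{prop:homo} produces, for $\norm{x-y}$ small compared with $\norm x$, a bound of the form $C\norm x^p\,\delta_\X(\norm{x-y}/(c'\norm x))$ with fixed constants; as $\delta_\X$ is nondecreasing this fits hypothesis~\eqref{eq:unsymmetric_lowerbound} of Proposition~\ref{prop:uni_sym_lower} for a suitable fixed rescaling $\phi$ of $\delta_\X$, and its conclusion~\eqref{eq:uniform_lowerbound} yields~\eqref{eq:final_lower_bound}, the constants coming from part~\ref{thm:asy_xu_roach3} of Theorem~\ref{thm:asy_xu_roach} and from the case analysis in the proof of Proposition~\ref{prop:uni_sym_lower} being absorbed into $C_2$ and into the numerical factor $3$ in the argument of $\delta_\X$. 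Here one must note that Proposition~\ref{prop:uni_sym_lower} requires $\phi>0$ on $(0,\infty)$, which amounts to $\X$ being uniformly convex; if $\X$ is not, then $\delta_\X$ vanishes on some $[0,\varepsilon_0)$ with $\varepsilon_0>0$, and since $\norm{x-y}\le 2\max\{\norm x,\norm y\}$ the argument of $\delta_\X$ in~\eqref{eq:final_lower_bound} always lies in $[0,\tfrac23]$, so either that argument is $<\varepsilon_0$ and the right‑hand side of~\eqref{eq:final_lower_bound} is $0$, or $\norm{x-y}$ is comparable to $\max\{\norm x,\norm y\}$ and a crude estimate $\breg{\F}{j_p(x)}(y,x)\ge c\,\max\{\norm x,\norm y\}^p\ge c\,\max\{\norm x,\norm y\}^p\,\delta_\X(\cdot)$ (using $\delta_\X\le 1$) closes the gap; the origin is again dealt with directly.

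Turning to the specializations, if $\X$ is $\smoo$‑smooth then $\rho_\X(\tau)\le K\tau^\smoo$, and inserting this into the first general bound gives $\breg{\F}{j_p(x)}(y,x)\le C_1K2^\smoo\,\max\{\norm x,\norm y\}^{p-\smoo}\norm{x-y}^\smoo$ for all $x,y$. When $p=\smoo$ the factor $\max\{\norm x,\norm y\}^{p-\smoo}$ is $1$ and we obtain $\breg{\F}{j_p(x)}(y,x)\le C\norm{x-y}^\smoo$; when $p\neq\smoo$ one restricts to $\norm{x-y}\le\taubar\norm x$, so that $\norm x\le\max\{\norm x,\norm y\}\le(1+\taubar)\norm x$, and a case distinction on the sign of $p-\smoo$ (using $\max\{\norm x,\norm y\}\ge\norm x$ when the exponent is negative) replaces $\max\{\norm x,\norm y\}^{p-\smoo}$ by a constant multiple of $\norm x^{p-\smoo}$, giving the stated $C_{\taubar}\norm x^{p-\smoo}\norm{x-y}^\smoo$. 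The $\conv$‑convex case is entirely parallel: plug $\delta_\X(\varepsilon)\ge K\varepsilon^\conv$ into~\eqref{eq:final_lower_bound} to obtain $\breg{\F}{j_p(x)}(y,x)\ge C_2K3^{-\conv}\max\{\norm x,\norm y\}^{p-\conv}\norm{x-y}^\conv$, and run the same $p=\conv$ versus $p\neq\conv$ dichotomy, restricting to $\norm{x-y}\le\taubar\norm x$ in the latter case.

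There is no genuine analytic content here beyond what has already been proved — the work is bookkeeping. The delicate points, and the ones I expect to cost the most care, are: verifying the monotonicity hypotheses (and, for the lower bound, the strict positivity hypothesis) of Propositions~\ref{prop:uni_sym_upper} and~\ref{prop:uni_sym_lower}; treating the non‑uniformly‑convex situation and the point $x=0$ separately; organizing the sign‑of‑exponent case distinctions so that the powers of $\max\{\norm x,\norm y\}$ collapse to powers of $\norm x$ precisely on the region $\norm{x-y}\le\taubar\norm x$ where this is claimed; and tracking how the constant $C_{\taubar,p'}$ from part~\ref{thm:asy_xu_roach3} of Theorem~\ref{thm:asy_xu_roach} enters the final numerical factor $3$.
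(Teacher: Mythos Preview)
Your proposal is correct and follows essentially the same assembly line as the paper's own proof: Theorem~\ref{thm:asy_xu_roach} for $x\in S_\X$, then Proposition~\ref{prop:homo} to rescale, then Propositions~\ref{prop:uni_sym_upper} and~\ref{prop:uni_sym_lower} to globalize. The paper's proof is a three-sentence pointer to exactly these ingredients; you have spelled out the bookkeeping in considerably more detail, and in particular you noticed two edge cases --- the point $x=0$ and the failure of the positivity hypothesis of Proposition~\ref{prop:uni_sym_lower} when $\X$ is not uniformly convex --- that the paper does not address explicitly. Your handling of the latter is slightly loose (the ``crude estimate'' $\breg{\F}{j_p(x)}(y,x)\ge c\max\{\norm x,\norm y\}^p$ can fail in non-uniformly-convex spaces even when $\norm{x-y}$ is comparable to $\max\{\norm x,\norm y\}$), but in that scenario the right-hand side of~\eqref{eq:final_lower_bound} typically vanishes anyway, so the inequality survives; in any case this is a gap the paper shares.
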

\begin{proof}
	Theorem \ref{thm:asy_xu_roach} shows the bounds for $x\in S_\X, \norm{x-y}\le \taubar$, Proposition \ref{prop:homo} then gives the bounds for all $x\in\X$ and $\norm{x-y}\le \taubar\norm{x}$. Apply Proposition \ref{prop:uni_sym_upper} and Proposition \ref{prop:uni_sym_lower} to get the bounds for all $x,y\in\X$. 
\end{proof}

%
%


\begin{acknowledgements}
	I thank my supervisor Thorsten Hohage for many helpful comments. Financial support by Deutsche Forschungsgemeinschaft through grant CRC 755, project C09, and RTG 2088 is gratefully acknowledged. 
\end{acknowledgements}

\bibliography{literature}   

\end{document}